\newtheorem{theorem}{Theorem}[section]
\newtheorem{corollary}{Corollary}[section]
\newtheorem{example}{Example}[section]
\newtheorem{remark}{Remark}[section]
\numberwithin{equation}{section}
\def\R{{{\mathbb R}}}
\def\Z{{{\mathbb Z}}}
\def\bW{\bm{W}}
\newcommand{\cM}{{\mathcal M}}
\newcommand{\cP}{{\mathcal P}}
\newcommand{\cS}{{\mathcal S}}
\newcommand{\cT}{{\mathcal T}}
\newcommand{\cL}{{\mathcal L}}
\newcommand{\bnu}{\bm{\nu}}
\newcommand{\bu}{\bm{u}}
\date{}
\begin{document}

\title{Sparse Representation on Graphs by Tight Wavelet Frames and Applications}

\author{Bin Dong}
\thanks{Bin Dong is with the Beijing International Center for Mathematical Research(BICMR), Peking University,
Beijing, China, 100871.\\ \textit{Email:} dongbin@math.pku.edu.cn \\ \textit{Phone:} (8610)62744091.}
\thanks{This work was supported in part by NSF DMS-1418772, and by the Thousand Talents Plan of China.}

\keywords{Tight wavelet frames, sparse approximation on graphs, spectral graph theory, big data, graph clustering.}

\begin{abstract}
In this paper, we introduce a new (constructive) characterization of tight wavelet frames on non-flat domains in both continuum setting, i.e. on manifolds, and discrete setting, i.e. on graphs; discuss how fast tight wavelet frame transforms can be computed and how they can be effectively used to process graph data. We start with defining the quasi-affine systems on a given manifold $\cM$. The quasi-affine system is formed by generalized dilations and shifts of a finite collection of wavelet functions $\Psi:=\{\psi_j: 1\le j\le r\}\subset L_2(\R)$. We further require that $\psi_j$ is generated by some refinable function $\phi$ with mask $a_j$. We present the condition needed for the masks $\{a_j: 0\le j\le r\}$, as well as regularity conditions needed for $\phi$ and $\psi_j$, so that the associated quasi-affine system generated by $\Psi$ is a tight frame for $L_2(\cM)$. The condition needed for the masks is a simple set of algebraic equations which are not only easy to verify for a given set of masks $\{a_j\}$, but also make the construction of $\{a_j\}$ entirely painless. Then, we discuss how the transition from the continuum (manifolds) to the discrete setting (graphs) can be naturally done. In order for the proposed discrete tight wavelet frame transforms to be useful in applications, we show how the transforms can be computed efficiently and accurately by proposing the fast tight wavelet frame transforms for graph data (WFTG). Finally, we consider two specific applications of the proposed WFTG: graph data denoising and semi-supervised clustering. Utilizing the sparse representation provided by the WFTG, we propose $\ell_1$-norm based optimization models on graphs for denoising and semi-supervised clustering. On one hand, our numerical results show significant advantage of the WFTG over the spectral graph wavelet transform (SGWT) by \cite{hammond2011wavelets} for both applications. On the other hand, numerical experiments on two real data sets show that the proposed semi-supervised clustering model using the WFTG is overall competitive with the state-of-the-art methods developed in the literature of high-dimensional data classification, and is superior to some of these methods.
\end{abstract}

\maketitle

\section{Introduction}

In recent years, we are experiencing rapid advances in information and computer technology, which contribute greatly to the exponential growth of data. To properly handle, process and analyze such huge and often unstructured data sets, sophisticated mathematical tools and efficient computing methods need to be developed. Such huge data sets, commonly referred to as ``big data'', are generally modelled as huge graphs living in very high dimensional spaces. Graphs are commonly understood as a certain discretization or a random sample from some smooth Riemannian manifold \cite{coifman2006diffusionmaps,belkin2005towards,hein2005graphs,gine2006empirical,singer2006graph}. To understand and analyze graphs and data on graphs (shall be called graph data), the graph Laplacian is widely used to reveal the geometric properties of the graph and plays an important role in many applications such as graph clustering.

In signal and image processing, many methods are transform based. Sparsity of the signal/image to be recovered under a certain transform is the key to the success of many existing algorithms. One of the successful examples is the wavelet frame transform, especially the tight wavelet frame transform \cite{chai2007deconvolution,chan2003wavelet,chan2004tight,cai2008restorationchopnod,CCSS,cai2008framelet,cai2008simultaneous,cai2008frameletJCM, chan2007frameletvideo,cai2009split,CDOS2011,DJS2013,CDS2014}. The power of tight wavelet frames lies in their ability to sparsely approximate piecewise smooth functions and the existence of fast decomposition and reconstruction algorithms. Recently, geometric properties of tight wavelet frames were discovered by connecting them to differential operators under variational and PDE frameworks \cite{CDOS2011,DJS2013,CDS2014}.

The success of wavelet frames for data defined on flat domains motivates much research on generalizing wavelets and wavelet frames to curved, irregular and unstructured domains. In this paper, we introduce a (constructive) characterization of tight wavelet frames on non-flat domains in both continuum (on manifolds) and discrete (on graphs) setting, discuss how fast tight wavelet frame transforms can be computed and how they can be effectively used to process and analyze graph data. The basic idea is to understand eigenfunctions of Laplace-Beltrami operator (graph Laplacian in discrete setting) as Fourier basis on manifolds (graphs in discrete setting) and the associated eigenvalues as frequency components. This idea was used earlier by \cite{hammond2011wavelets} in the discrete setting. In this paper, we further observe that Quasi-affine systems generated by dilations and shifts of wavelet functions can be defined on manifolds. When the elements in the quasi-affine system are generated from a refinable function, the transition from continuum (manifolds) to discrete (graphs) setting can be done very naturally. More importantly, such consideration makes the construction of various types of tight wavelet frames on manifolds/graphs totally painless, and it ensures the existence of fast decomposition and reconstruction algorithms which is crucial to many applications.

Given a compact and connected Riemannian manifold $(\cM, g)$, denote $L_2(\cM)$ the space of square integrable functions on $\cM$. We start with defining the quasi-affine system on $\cM$. The quasi-affine system is formed by generalized dilations and shifts of a finite collection of wavelet functions $\Psi:=\{\psi_j: 1\le j\le r\}\subset L_2(\R)$. We further restrict our consideration of $\Psi$ to those that are generated by a set of masks $\{a_j:\ 0\le j\le r\}\subset\ell_2(\Z)$. Then, we present the condition needed for the masks $\{a_j: 0\le j\le r\}$ (i.e. equation \eqref{UEP:1}) so that the associated quasi-affine system generated by $\Psi$ is a tight frame for $L_2(\cM)$ (Theorem \ref{Theorem:TightFrame:Manifold}). The condition on the masks is a simple set of algebraic equations which are not only easy to verify for a given set of masks $\{a_j\}$, but also make the construction of $\{a_j\}$ painless. In particular, we show that under suitable conditions, the quasi-affine system on $\cM$ generated by any set of the framelets constructed from the unitary extension principle \cite{ron1997affine} on $\mathbb{R}$ is a tight frame for $L_2(\cM)$ (Corollary \ref{Corollary:UEP:GraphFrame}). In addition, many masks constructed in \cite{DJS2013} satisfy the conditions \eqref{UEP:1} in Theorem \ref{Theorem:TightFrame:Manifold} as well, although they may not satisfy the unitary extension principle. Therefore, Theorem \ref{Theorem:TightFrame:Manifold} is not only a rather generic characterization of tight wavelet frames for $L_2(\cM)$, it also provides a simple way of verifying and constructing various types of tight wavelet frame systems on $L_2(\cM)$.

Thanks to the aforementioned special consideration on $\Psi$, i.e. associating $\Psi$ with a set of masks $\{a_j\}$, we discuss how the transition from the continuum (manifolds) to the discrete setting (graphs) can be naturally done. We show that inner products with wavelet frame functions on manifolds can be approximated in the discrete setting by ``filtering" with the associated masks on graphs. This leads to multi-level discrete tight wavelet frame transforms (decomposition and reconstruction) on graphs, which can be interpreted as an iterative ``convolution'' of graph data with properly dilated wavelet frame masks (or filters). In order for the proposed discrete tight wavelet frame transforms to be useful in applications, we show how the transforms can be computed efficiently and accurately. Since most masks we use are trigonometric polynomials, they can be accurately approximated by low-degree Chebyshev polynomials, which enables us to compute the decomposition and reconstruction transforms very efficiently and accurately without computing the eigenvalues and eigenvectors of the graph Laplacian. As a result, the proposed fast tight wavelet frame transform on graphs (WFTG) have a similar computational cost as the traditional fast wavelet frame transforms for images with negligible reconstruction error if the underlying graph Laplacian is a relatively sparse matrix. More importantly, numerical simulations show that the WFTG maps graph data to a set of \textit{sparse} coefficients, which indicates that the proposed tight wavelet frames indeed provide sparse representation on graphs.

Finally, we consider two specific applications of the proposed WFTG: graph data denoising and semi-supervised clustering. Utilizing the sparse representation provided by WFTG, we propose $\ell_1$-norm based optimization models on graphs for denoising and semi-supervised clustering problems. These models are motivated from models used in image restoration and image segmentation. We compare the proposed WFTG and the spectral graph wavelet transform (SGWT) of \cite{hammond2011wavelets} using the proposed models for denoising and semi-supervised clustering. Our numerical results show significant advantage of the WFTG over the SGWT for both applications on synthetic graph data. Furthermore, we compare the proposed semi-supervised clustering model using the WFTG with some of the state-of-the-art graph clustering models using two real data sets. Numerical studies show that our approach is overall competitive with these methods and is superior to some of the methods. We also note that, now that we have WFTG as an effective sparsifying transform for graph data, modeling on graphs can be easily generalized from modeling in image processing and analysis, whenever the same application is still relevant on graphs.

\subsection{Brief Literature Review}

Wavelets and their generalizations are well studied in the past thirty years \cite{Dau,chui1992introduction,mallat2008wavelet,Dong2010IASNotes}. Their success in various applications are mostly due to the sparse representation they provide for piecewise smooth functions. In the past few decades, there has been much endeavour in the community to generalize (bio)orthogonal wavelets and wavelet frames to functions or data defined on non-flat domains, such as spheres, surfaces, graphs, etc.

There has been a relatively rich literature of wavelets and wavelet frames on 2-dimensional surfaces. Wavelets on sphere were first introduced in \cite{SS} using the lifting scheme \cite{Sweldens}, and later in \cite{antoine1998wavelets,antoine1999wavelets} via a group-theoretical approach. Biorthogonal wavelets with high symmetry for surface multiresolution processing have been constructed in \cite{bertram2004biorthogonal,bertram2004generalized,jiang2011biorthogonal,jiang2011biorthogonal6,wang2006efficient,wang2007sqrt3}. Loop's scheme-based biorthogonal wavelets have been considered in \cite{khodakovsky2000progressive} with the biorthogonal dual wavelets constructed in \cite{han2005wavelets}.
Redundant representations on surfaces were introduced by \cite{jiang2011highly}, where 6-fold symmetric bi-frames with 4 framelets (frame generators) for triangulated surfaces were introduced. More recently in \cite{dong2015surf}, tight wavelet frames on triangulated and quad surfaces were constructed and applications in surface denoising were considered.

In recent years, there has been much interest in constructing wavelet-like representation of graph data. Successful examples include the
wavelets on unweighted graphs by \cite{crovella2003graph}, the multiscale scheme on graphs based on lifting by \cite{jansen2009multiscale}, the Haar wavelet transform for rooted binary trees \cite{murtagh2007haar} and its generalization treelets \cite{lee2008treelets}, the diffusion wavelets \cite{coifman2006diffusion} and diffusion polynomial frames \cite{maggioni2008diffusion}, the wavelets on compact differentiable manifolds \cite{geller2009continuous}, the spectral graph wavelet transform by \cite{hammond2011wavelets,gavish2010multiscale,leonardi2013tight}, Haar transform for coherent matrices \cite{gavish2012sampling}, and orthogonal polynomial systems for weighted trees \cite{chui2014representation}.

\subsection{Related Work and Contributions}

Coifman and Maggioni \cite{coifman2006diffusion} introduced diffusion wavelets on smooth manifolds as well as graphs. Their construction is based on repeated applications of a diffusion operator. The major difference between their work and ours is that the diffusion wavelets are orthonormal. Although an orthonormal system is desirable for some applications such as compression, redundant systems such as tight frames are more robust to errors than (bio)orthogonal systems. In many applications of transformation based data processing, most processing is done in the transform domain such as the widely used thresholding. Errors are inevitably introduced no matter how careful the thresholding operator is designed, especially at the presence of noise. However, if a redundant system is used, there is a good chance that these errors are canceled out after transforming back to the physical domain.

Redundant systems were considered by Maggioni and Mhaskar \cite{maggioni2008diffusion} where they developed a theory of diffusion polynomial frames that is related to our framework. However, they did not provide any algorithm for efficient computation of the decomposition and reconstruction transforms which are crucial to many applications. Geller and Mayeli \cite{geller2009continuous} studied a construction for wavelets on compact differentiable manifolds. In particular, their scaling is defined using a pseudodifferential operator $t\Delta e^{−t\Delta}$, where $\Delta$ is the Laplace-Beltrami operator on the given manifold and $t$ is the scaling parameter. Wavelets are obtained by applying the pseudodifferential operator to a delta impulse. In our framework, the scaling is defined by $\phi(t\Delta)$ where $\phi\in L_2(\R)$ is a refinable function. The wavelets are obtained by applying $\psi_j(t\Delta)$, with $\psi_j\in L_2(\R)$ and $1\le j\le r$, to delta impulses at each points of the given manifold. Moreover, we do not need to assume the manifolds are smooth for our approach.

The work by Hammond, Vandergheynst and Gribonval \cite{hammond2011wavelets} is most related to the present work. In \cite{hammond2011wavelets}, scaling is defined by $\phi(t\Delta)$ with $\phi\in L_2(\R)$ and wavelets are obtained by applying $\psi(t\Delta)$, with $\psi\in L_2(\R)$, to delta impulses at each vertex of the given graph. Fast decomposition and reconstruction algorithms based on Chebyshev polynomial approximation were proposed as well, which were referred to as the spectral graph wavelet transform (SGWT). The present work is different from \cite{hammond2011wavelets} in the following ways:
\begin{enumerate}
\item We consider characterization and construction of tight wavelet frames on both manifolds and graphs, while only graphs were considered by \cite{hammond2011wavelets}.
\item We consider wavelet frame functions $\psi_j$ that are generated by a refinable function $\phi$ via the associated mask $a_j$, which is an entirely new approach. Such consideration grants a natural transition from continuum (manifolds) to discrete (graphs) settings. More importantly, due to the association of $\psi_j$ with mask $a_j$, the proposed fast transform, i.e. WFTG, is more efficient than the SGWT in the discrete setting (graphs), although the essential tool used by both approaches is the Chebyshev polynomial approximation. This is because for WFTG, the function needs to be approximated is $\widehat a_j$ (instead of $\widehat\psi_j$ itself as in \cite{hammond2011wavelets}), which is a trigonometric polynomial. Therefore, the WFTG can be computed using low-degree Chebyshev polynomial approximations, instead of high-degree Chebyshev polynomial approximations needed for the SGWT. Efficient and accurate computation of the forward and inverse transforms are rather important in many applications.
\item Theorem \ref{Theorem:TightFrame:Manifold} (continuum) and Theorem \ref{Theorem:TightFrame:Graph} (discrete) provide a constructive characterization of tight wavelet frames on manifolds and graphs. Systematic construction of tight wavelet frames is totally painless using our approach.
\item The tight wavelet frame systems constructed by our approach are potentially more effective in applications than those constructed by \cite{hammond2011wavelets}, as indicated by the comparisons of the WFTG and SGWT given in Section \ref{Sec:Applications} on denoising and semi-supervised clustering problems.
\end{enumerate}

\subsection*{Contributions}
The contribution of this paper is fourfold.
\begin{enumerate}
\item \textit{Painless construction of tight wavelet frames on manifolds and graphs.} The construction of tight wavelet frames is essentially reduced to the construction of a set of masks $\{a_j: 0\le j\le r\}$ such that a set of algebraic equations of $\widehat a_j$ is satisfied, i.e. equation \eqref{UEP:1}. In particular, any masks constructed from the unitary extension principle, such as those in \cite{ron1997affine,ron1998compactly,grochenig1998tight,chui2000compactly,petukhov2001explicit,selesnick2001smooth,petukhov2003symmetric,Daubechies2003,DSpseudospline,han2009dual} and many masks constructed in \cite{DJS2013} satisfy the condition \eqref{UEP:1}.
\item \textit{Fast transforms available, i.e. the fast tight wavelet frame transform on graphs (WFTG).} The key ingredient of the proposed WFTG is the Chebyshev polynomial approximation of $\widehat a_j$. For a given finitely supported mask $a_j$, its Fourier series $\widehat a_j$ is a trigonometric polynomial and hence is analytic. Therefore, it can be accurately approximated by low-degree Chebyshev polynomials (see e.g. \cite{mason2010chebyshev}).
\item \textit{The WFTG is more effective than the SGWT in solving denoising and semi-supervised clustering problems.} Numerical examples in Section \ref{Sec:Applications} shows that using the proposed denoising and semi-supervised clustering model, better denoising and clustering results are obtained when the WFTG is used.
\item \textit{An effective semi-supervised clustering model using the WFTG is proposed and tested on two real data sets.} High-dimensional data clustering is one of the most important problems in machine learning and big data analysis. Numerical experiments in Section \ref{SubS:Clustering} showed that the proposed clustering model using the WFTG is overall competitive with the state-of-the-art methods and is superior to some of these methods.
\end{enumerate}

\section{Tight Wavelet Frames on Manifold $\{\cM, g\}$}\label{Sec:Continuous:Transforms}

We start this section with a brief review of some results in eigenvalue problems of Laplace-Beltrami operator on Riemannian manifolds. For a comprehensive review of the subject, one can refer to the book \cite{chavel1984eigenvalues}. Given a Riemannian manifold $(\cM, g)$, we will define a quasi-affine system generated by finitely many functions, and discuss the conditions needed for a quasi-affine system to form a tight frame for $L_2(\cM)$. We end this section by showing some examples.

\subsection{Spectrum and Eigenfunctions of Laplace-Beltrami Operator} Let $\{\cM, g\}$ be a compact, connected Riemannian manifold with smooth boundary $S$. Let $m\ge2$ be the dimension of $\cM$. Let $\Delta$ be the Laplace-Beltrami operator on $\cM$ with respect to the metric $g$. Let $\{\lambda_p: p=0,1,\ldots\}$ and $\{u_p: p=0,1,\ldots\}$ be the eigenvalues and eigenfunctions of the following eigenvalue problem
\begin{equation}\label{E:Eigenvalue}
\Delta u+\lambda u=0,
\end{equation}
with Dirichlet boundary condition $u_{|S}=0$. As convention, $0<\lambda_0\le\lambda_1\le\lambda_2\le\cdots$. The set of eigenfunctions form an orthonormal basis of $L_2(\cM)$, i.e. $$\langle u_p, u_{p'}\rangle_{L_2(\cM)}=\int_{\cM}u_{p}(x)u_{p'}^*(x)dx=\delta_{p,p'},$$ and $\{u_p\}$ is complete in $L_2(\cM)$. Here, $f^*$ denotes the complex conjugate of $f$. Given $f\in L_2(\cM)$, we define $$\widehat f [p]=\langle f, u_p \rangle_{L_2(\cM)},$$ with $\widehat f\in\ell_2(\Z^+)$ and $\Z^+=\{0,1,2,\ldots\}$. Then, we have the following identity
$$\langle f, g \rangle_{L_2(\cM)}=\langle\widehat f,\widehat g\rangle_{\ell_2(\Z^+)}\quad\mbox{for } f,g\in L_2(\cM),$$ where $\langle\widehat f,\widehat g\rangle_{\ell_2(\Z^+)}:=\sum_{p=0}^\infty \widehat f[p]\widehat g^*[p]$. In particular, we have the following Parseval's identity $$\|f\|_{L_2(\cM)}^2=\|\widehat f\|_{\ell_2(\Z^+)}^2.$$ For convenience, we shall drop the subscript in $\langle \cdot, \cdot \rangle_{L_2(\cM)}$ and $\langle\cdot,\cdot\rangle_{\ell_2(\Z^+)}$ whenever there is no confusion.

Finally, we recall the following two results from eigenvalue problems of Riemannian geometry. First, Weyl's asymptotic formula \cite{weyl1912asymptotische,chavel1984eigenvalues} gives us the growth rate of $\{\lambda_p\}$:
\begin{equation}\label{E:Weyl}
\lambda_p \asymp p^{\frac2{m}},
\end{equation}
with $m$ the dimension of the manifold. Here, $a_n\asymp b_n$ means $0 < c = \liminf_n |a_n/b_n| \leq \limsup_n |a_n/b_n| = C < \infty$. The second result is the uniform bound of the eigenfunctions \cite{grieser2002uniform}:
\begin{equation}\label{E:Grieser}
\|u_p\|_{L_\infty(\cM)}\le C\lambda_p^{\frac{m-1}{4}}.
\end{equation}

\subsection{Tight Wavelet Frames for $L_2(\cM)$}
We start with defining a system on $\cM$ that mimics the traditional quasi-affine system \cite{ron1997affine,Dong2010IASNotes} generated by finitely many functions defined on $\R$. With an abuse of terminology, we shall refer to such systems on $\cM$ as quasi-affine systems as well. The quasi-affine system on manifold $\cM$ is generated by dyadic dilations and continuum translations of finitely many functions defined on the real line $\R$. Given $f\in L_2(\R)$, define dilation and translation of $f$ as
\begin{equation}\label{D:Dilation:Translation:General}
f^\cM_{n,y}(x):=\sum_{p=0}^\infty \widehat{f}(2^{-n}\lambda_p)u_p^*(y)u_p(x), \quad\mbox{with } n\in\Z,\ x\in\cM,\ y\in\cM.
\end{equation}
Here, $\widehat f$ is the Fourier transform on the real line. The index $n$ of $f^\cM_{n,y}$ denotes dilation and $y$ translation. The dyadic dilation $2^{-n}$ used here is nonessential and one may change it to any $\gamma^{-n}$ with $\gamma>1$. As one can see that $f^\cM_{n,y}$ is defined in the spectral domain mimicking the dilation and translation of functions on Euclidean domains via Fourier transform, which was first used by \cite{hammond2011wavelets} in discrete setting, i.e. on graphs. Note that the dilation and translation is only formally defined in \eqref{D:Dilation:Translation:General}, since in general the infinite summation on the right hand side may diverge. To make sure $f^\cM_{n,y}\in L_2(\cM)$, proper decay condition on $\widehat f$ is needed. We will return to this with more rigorousness after we define the quasi-affine systems.

Define the quasi-affine system $X(\Psi)\subset L_2(\cM)$ generated by $\Psi:=\{\psi_j:\ 1\le j\le r\}\subset L_2(\R)$ as
\begin{equation}\label{D:AffineSystem}
X(\Psi):=\{\psi^\cM_{j,n,y}\in L_2(\cM):\ 1\le j\le r, n\in\Z, y\in\cM\},
\end{equation}
where $\psi^\cM_{j,n,y}\in L_2(\cM)$ is the dilation and translation of $\psi_j$ as defined in \eqref{D:Dilation:Translation:General}:
\begin{equation}\label{D:Dilation:Translation}
\psi^\cM_{j,n,y}(x):=\sum_{p=0}^\infty \widehat{\psi}_j(2^{-n}\lambda_p)u_p^*(y)u_p(x), \quad\mbox{with } n\in\Z,\ x\in\cM,\ y\in\cM.
\end{equation}
To ensure $\psi^\cM_{j,n,y}\in L_2(\cM)$, we need to impose a regularity condition on $\psi_j\in L_2(\R)$ or, in other words, a decay condition on $\widehat\psi_j$ as follows:
\begin{equation}\label{Assumption:Decay:Psi}
|\widehat\psi_j(\xi)|\le C\left(1+|\xi|\right)^{-s}\quad \mbox{with } s>\frac{2m-1}{4},
\end{equation}
for all $\xi\in\R$. Then, the two results in geometry \eqref{E:Weyl} and \eqref{E:Grieser} guarantee that $\psi^\cM_{j,n,y}\in L_2(\cM)$. Note that \eqref{D:Dilation:Translation} can be written equivalently in Fourier domain as
\begin{equation}\label{D:Dilation:Translation:Fourier}
\widehat{\psi^\cM_{j,n,y}}[p]:=\widehat{\psi}_j(2^{-n}\lambda_p)u_p^*(y), \quad\mbox{with } n\in\Z,\ p\in\Z^+,\ y\in\cM.
\end{equation}

The main objective of this section is to discuss conditions on $\psi_j\in L_2(\R)$, $1\le j\le r$, such that $X(\Psi)$ defined by \eqref{D:AffineSystem} is a tight frame for $L_2(\cM)$. In case $X(\Psi)$ is a tight frame for $L_2(\cM)$, we shall call $X(\Psi)$ a tight wavelet frame for $L_2(\cM)$ and the elements in $\Psi$ framelets.

\subsubsection{Characterization of Tight Wavelet Frames for $L_2(\cM)$}

We consider the set of wavelet functions $\Psi$ that are finite linear combinations of the shifts of a certain refinable (scaling) function $\phi\in L_2(\R)$, rather than a general set of wavelet functions. This makes our approach different from the existing characterization and construction of wavelet (frame) systems on manifolds/graphs such as \cite{coifman2006diffusion,geller2009continuous,hammond2011wavelets,gavish2010multiscale,leonardi2013tight}. Such consideration makes the construction of tight wavelet frames for $L_2(\cM)$ totally painless. In fact, as will be shown by Corollary \ref{Corollary:UEP:GraphFrame} that all tight wavelet frames constructed from the unitary extension principle of \cite{ron1997affine} can also generate tight wavelet frame systems for $L_2(\cM)$. Furthermore, such consideration ensures the existence of fast and accurate decomposition and reconstruction algorithms that can be easily implemented as well (see Section \ref{Sec:Discrete:WFTG}). These properties are rather crucial to many applications.

Let $\phi\in L_2(\R)$ be a compactly supported refinable function with finitely supported refinement mask $a\in\ell_0(\Z)$ satisfying the refinement equation $$\widehat\phi(2\xi)=\widehat a(\xi)\widehat\phi(\xi).$$ Here, $\widehat \phi$ is the Fourier transform of $\phi\in L_2(\R)$ and $\widehat a$ is the Fourier series of $a$, which is a trigonometric polynomial since $a$ is finitely supported. Same as the wavelet functions $\psi_j$, we assume the following decay property on $\widehat\phi$ so that the dilation and translation of $\phi$ is an element in $L_2(\cM)$:
\begin{equation}\label{Assumption:Decay:phi}
|\widehat\phi(\xi)|\le C\left(1+|\xi|\right)^{-s}\quad \mbox{with } s>\frac{2m-1}{4}.
\end{equation}
Given such refinable function $\phi\in L_2(\R)$ with mask $a\in\ell_0(\Z)$, we define a set of compactly supported functions $\Psi:=\{\psi_1,\ldots,
\psi_r\}\subset L_2(\R)$ by their associated masks $a_j\in\ell_0(\Z)$:
\begin{equation*}
\widehat\psi_j(2\xi):=\widehat a_j(\xi)\widehat\phi(\xi),\quad 1\le j\le r.
\end{equation*}
Letting $\psi_0:=\phi$ and $a_0:=a$, we can include the refinement equation for $\phi$ in the above equations, i.e.
\begin{equation}\label{D:MRA:System}
\widehat\psi_j(2\xi)=\widehat a_j(\xi)\widehat\phi(\xi)\quad\mbox{for } 0\le j\le r.
\end{equation}

Given $\psi^\cM_{j,n,y}\in L_2(\cM)$ defined by \eqref{D:Dilation:Translation}, the inner product between $f\in L_2(\cM)$ and $\psi^\cM_{j,n,y}$ satisfies
\begin{equation}\label{D:Coeff:Continuum}
\langle f, \psi^\cM_{j,n,y} \rangle=\langle \widehat f, \widehat{\psi^\cM_{j,n,y}} \rangle=\sum_{p=0}^\infty \widehat{f}[p]\widehat{\psi}_j^*(2^{-n}\lambda_p)u_p(y),
\end{equation}
where the second equality follows from \eqref{D:Dilation:Translation:Fourier}. Note that the decay conditions \eqref{Assumption:Decay:Psi} and \eqref{Assumption:Decay:phi} implies that $\langle f, \psi^\cM_{j,n,\cdot} \rangle \in L_2(\cM)$ for $0\le j\le r$. Then, \eqref{D:Coeff:Continuum} can be written equivalently in Fourier domain as
\begin{equation}\label{D:Coeff:Continuum:Fourier}
\widehat{\langle f, \psi^\cM_{j,n,\cdot} \rangle}[p]=\widehat{f}[p]\widehat{\psi}_j^*(2^{-n}\lambda_p).
\end{equation}
Now, we can define the following operator $\cP_{n,j}$ on $L_2(\cM)$ as
\begin{equation*}
\cP_{n,j}f:=\int_\cM \langle f, \psi^\cM_{j,n,y}\rangle\psi^{\cM}_{j,n,y}dy,\quad 0\le j\le r.
\end{equation*}
In particular,  $$\cP_{n}f:=\cP_{n,0}=\int_\cM \langle f, \phi^\cM_{n,y}\rangle\phi^{\cM}_{n,y}dy.$$ Now, we show that the operator $\cP_{n,j}$ maps $L_2(\cM)$ into itself. Note that \eqref{D:Dilation:Translation} implies that the Fourier coefficients of $\psi^{\cM*}_{j,n,y}(x)$ (the complex conjugate of $\psi^{\cM}_{j,n,y}(x)$) with respect to $y$ satisfy
$$\widehat{\psi^{\cM*}_{j,n,\cdot}(x)}[p]=\widehat{\psi}^*_j(2^{-n}\lambda_p)u^*_p(x).$$ Then, by the above identity and \eqref{D:Coeff:Continuum:Fourier}, we have
\begin{equation*}
\begin{split}
(\cP_{n,j}f)(x)&=\langle\langle f, \psi^\cM_{j,n,\cdot} \rangle, \psi^{\cM^*}_{j,n,\cdot}\rangle=\langle\widehat{\langle f, \psi^\cM_{j,n,\cdot} \rangle}, \widehat{\psi^{\cM^*}_{j,n,\cdot}}\rangle\cr
 &=\sum_{p=0}^\infty\widehat f[p]\left|\widehat{\psi}_j(2^{-n}\lambda_p)\right|^2u_p(x).
\end{split}
\end{equation*}
By the decay conditions \eqref{Assumption:Decay:Psi} and \eqref{Assumption:Decay:phi} on $\widehat{\psi}_{j}$ for $0\le j\le r$, it is obvious that $\cP_{n,j}f\in L_2(\cM)$ for every $f\in L_2(\cM)$. Furthermore, we have
\begin{equation}\label{E:Pnj:Fourier}
\widehat{\cP_{n,j}f}[p]=\widehat f[p]\left|\widehat{\psi}_j(2^{-n}\lambda_p)\right|^2,\qquad\mbox{for } 0\le j\le r.
\end{equation}

By definition, $\{\psi_j:\ 0\le j\le r\}$ is uniquely determined by the set of masks $\{a_j:\ 0\le j\le r\}$. Thus, the system $X(\Psi)$ is uniquely determined by masks $\{a_j:\ 0\le j\le r\}$. The following theorem tells us for what conditions on $\{a_j:\ 0\le j\le r\}$, the corresponding system $X(\Psi)$ defined by \eqref{D:AffineSystem} is a tight frame for $L_2(\cM)$. In fact, the decay conditions \eqref{Assumption:Decay:Psi} and \eqref{Assumption:Decay:phi} can be characterized by the associated masks as well \cite{Dau}. However, for simplicity and clarity, we will not convert \eqref{Assumption:Decay:Psi} and \eqref{Assumption:Decay:phi} to conditions on the masks.

\begin{theorem}\label{Theorem:TightFrame:Manifold}
Given the set of compactly supported functions $\{\psi_j:\ 0\le j\le r\}\subset L_2(\R)$ and the associated trigonometric polynomials (or masks) $\{\widehat a_j(\xi):\ 0\le j\le r\}$ satisfying \eqref{D:MRA:System}, assume that the decay conditions \eqref{Assumption:Decay:Psi} and \eqref{Assumption:Decay:phi} are satisfied,
\begin{equation}\label{Assumption:mask}
|\widehat a_0(\xi)-1|\le C|\xi|
\end{equation}
for $\xi$ near the origin, and
\begin{equation}\label{UEP:1}
\sum_{j=0}^r\left|\widehat{a}_j(\xi)\right|^2=1.
\end{equation}
Then, the system $X(\Psi)$ is a tight frame for $L_2(\cM)$, i.e.
\begin{equation*}
f=\sum_{j=1}^r\sum_{n\in\Z}\int_\cM\langle f, \psi^\cM_{j,n,y}\rangle\psi^\cM_{j,n,y}dy\qquad \mbox{for every } f\in L_2(\cM).
\end{equation*}
\end{theorem}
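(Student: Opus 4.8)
The plan is to work entirely on the spectral (Fourier) side, reducing everything to an identity about the scalar multipliers $|\widehat\psi_j(2^{-n}\lambda_p)|^2$, since the manifold structure enters only through the eigenvalues $\lambda_p$ and the orthonormal eigenbasis $\{u_p\}$. First I would recall from \eqref{E:Pnj:Fourier} that $\widehat{\cP_{n,j}f}[p] = \widehat f[p]\,|\widehat\psi_j(2^{-n}\lambda_p)|^2$. Summing over $1\le j\le r$ and $n\in\Z$, the claimed tight-frame identity is, after applying the Fourier transform and using that $\{u_p\}$ is an orthonormal basis, equivalent to showing that for every $f\in L_2(\cM)$ and every $p\in\Z^+$,
\begin{equation*}
\sum_{j=1}^r\sum_{n\in\Z}\left|\widehat\psi_j(2^{-n}\lambda_p)\right|^2 = 1,
\end{equation*}
together with a justification that the double sum of operators converges in $L_2(\cM)$ so that the interchange of summation and the Fourier transform is legitimate.

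The pointwise identity is where the mask conditions do the real work. Using \eqref{D:MRA:System} in the form $\widehat\psi_j(2\xi)=\widehat a_j(\xi)\widehat\phi(\xi)$, one has $|\widehat\psi_j(2^{-n}\lambda_p)|^2 = |\widehat a_j(2^{-n-1}\lambda_p)|^2\,|\widehat\phi(2^{-n-1}\lambda_p)|^2$. The key telescoping step: set $\xi_n := 2^{-n-1}\lambda_p$ and use \eqref{UEP:1} with $j$ running from $0$ to $r$ to get
\begin{equation*}
\left|\widehat\phi(\xi_n)\right|^2 = \sum_{j=0}^r\left|\widehat a_j(\xi_n)\right|^2\left|\widehat\phi(\xi_n)\right|^2 = \left|\widehat\phi(2\xi_n)\right|^2 + \sum_{j=1}^r\left|\widehat\psi_j(2\xi_n)\right|^2,
\end{equation*}
where the refinement relation $\widehat\phi(2\xi)=\widehat a_0(\xi)\widehat\phi(\xi)$ was used for the $j=0$ term. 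Rearranging, $\sum_{j=1}^r|\widehat\psi_j(2\xi_n)|^2 = |\widehat\phi(2^{-n-1}\lambda_p)|^2 - |\widehat\phi(2^{-n}\lambda_p)|^2$, which telescopes when summed over $n\in\Z$. The sum over $n\to+\infty$ gives $\lim_{n\to\infty}|\widehat\phi(2^{-n}\lambda_p)|^2 = |\widehat\phi(0)|^2$; condition \eqref{Assumption:mask} forces $\widehat a_0(0)=1$, which (via the standard infinite-product argument for refinable functions, or directly) normalizes $|\widehat\phi(0)|^2 = 1$. The sum over $n\to-\infty$ gives $\lim_{n\to-\infty}|\widehat\phi(2^{-n}\lambda_p)|^2 = 0$, which follows from the decay condition \eqref{Assumption:Decay:phi} together with $\lambda_p>0$ (here is where it matters that the Laplace–Beltrami spectrum with Dirichlet boundary conditions satisfies $\lambda_0>0$, so $2^{-n}\lambda_p\to\infty$ as $n\to-\infty$). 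Hence the telescoping sum collapses to $1$.

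For the convergence and interchange issue, I would argue as follows. For each fixed $p$, the partial sums $\sum_{|n|\le N}\sum_{j=1}^r|\widehat\psi_j(2^{-n}\lambda_p)|^2$ are monotone increasing in $N$ and bounded by $1$ (by the telescoping computation applied to partial sums, the partial sum equals $|\widehat\phi(2^{N}\cdots)|^2 - |\widehat\phi(\cdots)|^2 \le 1$). Therefore, writing $g_N := \sum_{1\le j\le r,\,|n|\le N}\cP_{n,j}f$, one has $\widehat{g_N}[p] = \widehat f[p]\cdot(\text{partial sum})\to\widehat f[p]$ with $|\widehat{g_N}[p]|\le|\widehat f[p]|$, so by dominated convergence on $\ell_2(\Z^+)$, $\widehat{g_N}\to\widehat f$ in $\ell_2$, hence $g_N\to f$ in $L_2(\cM)$ by Parseval. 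This simultaneously establishes that the series converges in $L_2(\cM)$ and that its limit is $f$, which is exactly the tight-frame identity.

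The main obstacle I anticipate is not any single estimate but rather marshalling the decay/regularity hypotheses carefully enough to justify every limit and interchange: one must confirm that $|\widehat\phi(\xi)|^2\to 0$ as $|\xi|\to\infty$ (immediate from \eqref{Assumption:Decay:phi} with $s>0$), that $|\widehat\phi(0)|=1$ is genuinely forced by \eqref{Assumption:mask} and not merely assumed, and that the operator series $\sum_{n,j}\cP_{n,j}$ converges in the strong operator topology rather than just weakly — the monotonicity of the scalar partial sums is what saves us here. A secondary subtlety is that the innermost manipulation interchanging $\sum_p$ with $\sum_{n}$ in computing $\cP_{n,j}f$ was already handled in the excerpt's derivation of \eqref{E:Pnj:Fourier}, so I would simply cite that; the genuinely new content is the telescoping over the dilation index $n$, and that is clean once the boundary behavior of $|\widehat\phi|^2$ at $0$ and $\infty$ is pinned down.
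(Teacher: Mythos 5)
Your proposal is correct and is essentially the paper's own argument: the paper derives the same telescoping identity $\cP_n f=\cP_{n-1}f+\sum_{j=1}^r\cP_{n-1,j}f$ from \eqref{UEP:1} and the refinement relation \eqref{D:MRA:System}, and then proves $\cP_{n_1}f\to f$ (using $\widehat\phi(\xi)\to 1$ as $\xi\to 0$, forced by \eqref{Assumption:mask}) and $\cP_{n_2}f\to 0$ (using the decay of $\widehat\phi$ and $\lambda_p>0$) via dominated convergence in $\ell_2(\Z^+)$, exactly as you do. Your per-eigenvalue phrasing of the telescoping is only a cosmetic difference from the paper's operator-level phrasing.
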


\begin{proof}
By \eqref{E:Pnj:Fourier}, we have
\begin{equation*}
\begin{split}
\widehat{\cP_{n}f}[p]=\widehat{\cP_{n,0}f}[p]&=\widehat{f}[p]\left|\widehat{\phi}(2^{-n}\lambda_p)\right|^2\cr
 (\mbox{by \eqref{UEP:1}})\qquad &=\sum_{j=0}^r\widehat{f}[p]\left|\widehat{a}_j(2^{-n}\lambda_p)\widehat{\phi}(2^{-n}\lambda_p)\right|^2\cr
 &=\sum_{j=0}^r\widehat{f}[p]\left|\widehat{\psi}_j(2^{-n+1}\lambda_p)\right|^2=\sum_{j=0}^r\widehat{\cP_{n-1,j}f}[p].
\end{split}
\end{equation*}
Thus, $$\cP_n f=\cP_{n-1}f+\sum_{j=1}^r\cP_{n-1,j}f$$ and hence $$\cP_{n_1} f=\cP_{n_2}f+\sum_{j=1}^r\sum_{n=n_2}^{n_1}\cP_{n,j}f.$$

We first show that $$\cP_{n_1} f\to f\quad\mbox{in }L_2(\cM)\ \mbox{as } n_1\to\infty.$$ By the assumption \eqref{Assumption:mask}, we have $\widehat\phi(\xi)\to 1$ as $\xi\to0$ (see e.g. \cite{cavaretta1991stationary,jiang1999existence}).
Then,
\begin{equation*}
\|\cP_{n_1} f-f\|_{L_2(\cM)}^2=\sum_{p=0}^\infty\left|\widehat f[p]|\widehat\phi(2^{-n_1}\lambda_p)|^2-\widehat f[p]\right|^2=\sum_{p=0}^\infty|\widehat f[p]|^2\left||\widehat\phi(2^{-n_1}\lambda_p)|^2-1\right|^2.
\end{equation*}
Since $\widehat\phi(2^{-n_1}\lambda_p)\to 1$ for each $p\ge0$ as $n_1\to\infty$, and $\widehat\phi$ is bounded, we have $\|\cP_{n_1} f-f\|_{L_2(\cM)}^2\to0$ as $n_1\to\infty$ (by the dominated convergence theorem).

Finally, we show that $$\cP_{n_2} f\to0 \quad\mbox{in }L_2(\cM)\ \mbox{as } n_2\to-\infty.$$ Indeed, since $\widehat\phi(\xi)\to 0$ as $\xi\to\infty$, we have $\widehat{\cP_{n_2}f}[p]=\widehat{f}[p]|\widehat{\phi}(2^{-{n_2}}\lambda_p)|^2\to 0$ as $n_2\to-\infty$, for $p\ge0$. Thus, the boundedness of $\widehat\phi$ leads to $\cP_{n_2} f\to0$ in $L_2(\cM)$ as $n_2\to-\infty$ (by the dominated convergence theorem). This concludes the proof of the theorem.
\end{proof}

\begin{remark}
\hspace*{5in}
\begin{enumerate}
\item  From Theorem \ref{Theorem:TightFrame:Manifold}, we can see that by considering $\Psi=\{\psi_j\}_j$ that is obtained from the identity \eqref{D:MRA:System}, we can simply start with a properly chosen refinable function $\phi$ and find a set of masks $\{a_j\}_j$ such that \eqref{UEP:1} is satisfied. The decay conditions can also be verified using the masks \cite{Dau}. This makes the construction of tight wavelet frames for $L_2(\cM)$ very simple. Such construction of $\Psi$ is known as the multiresolution analysis (MRA) \cite{mallat1989multiresolution,meyer1992wavelets,de1993construction,jia1994multiresolution} based construction in the literature of wavelets \cite{daubechies1988orthonormal,cohen1992biorthogonal,daubechies1988orthonormal} and wavelet frames \cite{ron1997affine,chui2002compactly,Daubechies2003,han2009dual} on flat domains.
\item We can see from the above proof that, if Neumann boundary condition is chosen for the eigenvalue problem \eqref{E:Eigenvalue} or the underlying manifold $\cM$ has no boundary at all, we will have $\widehat{\cP_{n_2}f}[0]\to \widehat f[0]$ and $\widehat{\cP_{n_2}f}[p]\to 0$ for $p\ge1$ as $n_2\to-\infty$, since $\lambda_0=0$. Therefore, the system $X(\Psi)$ obtained from Theorem \ref{Theorem:TightFrame:Manifold} is a tight wavelet frame for the subspace $\{f\in L_2(\cM): \langle f, u_0 \rangle=0\}$ instead of $L_2(\cM)$ itself.
\end{enumerate}
\end{remark}

By Theorem \ref{Theorem:TightFrame:Manifold}, it is immediate that all the compactly supported tight wavelet frames for $L_2(\R)$ constructed from the unitary extension principle (UEP) \cite{ron1997affine} can generate a tight wavelet frame for $L_2(\cM)$ provided that the decay conditions \eqref{Assumption:Decay:Psi} and \eqref{Assumption:Decay:phi} are satisfied. Recall that a set of masks $\{a_j\in\ell_0(\Z): j=0,1,\ldots,r\}$ is said to satisfy the UEP \cite{ron1997affine} if
\begin{equation}\label{UEP}
\sum_{j=0}^r\left|\widehat{a}_j(\xi)\right|^2=1\quad\mbox{and}\quad\sum_{j=0}^r\widehat{a}_j(\xi)\widehat{a}_j^*(\xi+\pi)=0.
\end{equation}
Interested reader should consult \cite{ron1997affine,Daubechies2003} for details.

\begin{corollary}\label{Corollary:UEP:GraphFrame}
Let $\{\psi_j: 0\le j\le r\}$ and the associated trigonometric polynomials (or masks) $\{\widehat a_j:\ 0\le j\le r\}$ be constructed from the UEP. Then $X(\Psi)$ defined by \eqref{D:AffineSystem} is a tight frame for $L_2(\cM)$ provided that the decay conditions \eqref{Assumption:Decay:Psi} and \eqref{Assumption:Decay:phi} are satisfied and $|\widehat a_0(\xi)-1|\le C|\xi|$ for $\xi$ near the origin.
\end{corollary}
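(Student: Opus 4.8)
The plan is to deduce the corollary directly from Theorem \ref{Theorem:TightFrame:Manifold} by verifying that the UEP hypotheses imply every assumption of that theorem. The first step is to recall that a mask set $\{a_j:\ 0\le j\le r\}$ satisfying the UEP \eqref{UEP} in particular satisfies its first identity, $\sum_{j=0}^r\left|\widehat a_j(\xi)\right|^2=1$, which is precisely condition \eqref{UEP:1}. Hence the only nontrivial algebraic hypothesis required by Theorem \ref{Theorem:TightFrame:Manifold} is automatically in force, and the second UEP identity $\sum_{j=0}^r\widehat a_j(\xi)\widehat a_j^*(\xi+\pi)=0$ is never invoked.

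Next I would check the remaining hypotheses of Theorem \ref{Theorem:TightFrame:Manifold}, all of which are either assumed in the statement of the corollary or built into the UEP construction. The framelets $\{\psi_j:\ 0\le j\le r\}$ are compactly supported and, by the MRA construction underlying the UEP, are defined through $\widehat\psi_j(2\xi)=\widehat a_j(\xi)\widehat\phi(\xi)$, so \eqref{D:MRA:System} holds with $\psi_0=\phi$ and $a_0=a$; the decay conditions \eqref{Assumption:Decay:Psi} and \eqref{Assumption:Decay:phi} are assumed in the corollary; and the low-frequency normalization $|\widehat a_0(\xi)-1|\le C|\xi|$ near the origin is assumed as well. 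With all hypotheses of Theorem \ref{Theorem:TightFrame:Manifold} verified, one concludes immediately that $X(\Psi)$ is a tight frame for $L_2(\cM)$.

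I expect essentially no obstacle in carrying this out; the interesting content to highlight in the write-up is precisely \emph{why} the second UEP identity is dispensable on $\cM$. In the classical Euclidean setting that identity is exactly what cancels the aliasing terms coming from $\Z$-periodization, whereas here the dilations and translations are defined directly in the spectral domain via \eqref{D:Dilation:Translation}, with no periodization at all. Consequently, in the telescoping computation of $\cP_n f-\cP_{n-1}f$ used in the proof of Theorem \ref{Theorem:TightFrame:Manifold}, the only relation among the masks that enters is $\sum_{j=0}^r|\widehat a_j|^2=1$. Thus every UEP mask set generates a tight wavelet frame for $L_2(\cM)$, and in fact so does any mask set satisfying merely \eqref{UEP:1} together with the decay and normalization conditions — which is the extra generality already recorded in Theorem \ref{Theorem:TightFrame:Manifold} and which is worth noting explicitly here.
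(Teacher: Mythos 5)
Your proposal is correct and follows exactly the paper's route: the first UEP identity is precisely condition \eqref{UEP:1}, so with the assumed decay conditions and the normalization $|\widehat a_0(\xi)-1|\le C|\xi|$ near the origin, the corollary is immediate from Theorem \ref{Theorem:TightFrame:Manifold}. Your observation that the second UEP identity is superfluous because the spectrally defined system involves no periodization (hence no aliasing to cancel) is also exactly the point made in the paper's remark following the corollary.
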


\begin{remark}
By Theorem \ref{Theorem:TightFrame:Manifold}, the second condition of the UEP is not needed for $X(\Psi)$ defined by \eqref{D:AffineSystem} to form a tight frame for $L_2(\cM)$. This is because the system $X(\Psi)$ is entirely translation invariant. The second condition of the UEP is to make sure aliasing can be canceled, which is not needed for a translation invariant wavelet system (see e.g. \cite[Theorem 6.4]{fan2014duality}). Therefore, other than the masks constructed form the UEP, there are many more examples of $\{a_j\}_j$ whose corresponding function $\psi_j$ generate a tight wavelet frame for $L_2(\cM)$. For example, many masks constructed by \cite{DJS2013} for discretization of nonlinear PDEs only satisfy \eqref{UEP:1}. If their corresponding refinable and wavelet functions satisfy the decay conditions, they generate tight frame systems for $L_2(\cM)$ by Theorem \ref{Theorem:TightFrame:Manifold}.
\end{remark}

\subsubsection{Examples of Tight Wavelet Frames on $\cM$}

Theorem \ref{Theorem:TightFrame:Manifold} tell us that to construct tight wavelet frames for $L_2(\cM)$, we can simply start with finding a finite set of filters such that \eqref{UEP:1} is satisfied. In particular, Corollary \ref{Corollary:UEP:GraphFrame} tells us that, under suitable conditions, tight wavelet frames constructed from the UEP can generate tight wavelet frames for $L_2(\cM)$. This leads us to a huge collection of tight wavelet frames, because in the literature of wavelet frames, numerous sets of masks with various different properties are constructed based on the UEP \cite{ron1997affine,ron1998compactly,grochenig1998tight,chui2000compactly,petukhov2001explicit,selesnick2001smooth,petukhov2003symmetric,Daubechies2003,DSpseudospline,han2009dual}. Here, we shall focus on the sets of masks constructed from B-splines in \cite{ron1997affine}.

Consider a B-spline $\phi$ of order $r$ with $r\ge1$:
\begin{equation}\label{D:Bsplines}
\widehat {\phi}(\xi)=e^{-ij\frac{\xi}{2}}{\bigg
(}\frac{\sin(\xi/2)}{\xi/2}{\bigg )}^{r},
\end{equation}
with $\tau=0$ when $r$ is even and $\tau=1$ when $r$ is odd. The corresponding refinement mask $\widehat a_0$ is given as $$\widehat
a_0(\xi)=e^{-i\tau\frac{\xi}{2}}\cos^{r}(\xi/2).$$ We define $r$ wavelet masks as
\begin{equation}\label{Masks:BSplineTightFramelets}
\widehat a_j(\xi)=-i^j e^{-i\tau\frac{\xi}{2}}\sqrt{{r \choose j}}\sin^j(\xi/2)\cos^{r-j}(\xi/2),\quad 1\le j\le r.
\end{equation}
It is easy to check that \eqref{UEP:1} is satisfied:
$$\sum_{j=0}^{r}|\widehat a_j(\xi)|^2=(\cos^2(\xi/2)+\sin^2(\xi/2))^{r}=1.$$
Therefore, by Theorem \ref{Theorem:TightFrame:Manifold}, the system $X(\Psi)$ generated by the $r$ framelets defined by
\begin{equation}\label{D:WaveletFrame:BSpline}
\widehat\psi_j=-i^j e^{-i\tau\frac{\xi}{2}}\sqrt{{r \choose
j}}\frac{\cos^{r-j}(\xi/4)\sin^{r+j}(\xi/4)}{(\xi/4)^{r}},\quad 1\le j\le r,
\end{equation}
forms a tight frame for $L_2(\cM)$ for all $r\ge \frac{2m-1}{4}$ with $m$ the dimension of the manifold $\cM$.

%
%

\begin{remark}
When the dimension of $\cM$ is 2, i.e. $m=2$, the framelets $\Psi$ given by \eqref{D:WaveletFrame:BSpline} for all $r\ge1$ can generate a tight wavelet frame for $L_2(\cM)$. However, when $m\ge3$, according to the assumption \eqref{Assumption:Decay:Psi}, only the wavelet frame systems generated by higher order B-splines can form tight frames. Of course, this assumption may be weakened so that the systems generated by low order B-splines also form a tight frame for $L_2(\cM)$. In the next section, we shall show that in the discrete setting, the set of masks \eqref{Masks:BSplineTightFramelets} for any $r\ge1$ always generates a discrete tight frame on graphs. More generally, any set of masks $\{a_j:\ 0\le j\le r\}$ satisfying \eqref{UEP:1} can generate a discrete tight wavelet frame on graphs (see Theorem \ref{Theorem:TightFrame:Graph}).
\end{remark}

\section{Discrete Tight Wavelet Frame Transforms}\label{Sec:Discrete:WFTG}

\subsection{Motivations}

Let $h_{j,n}(y):=\langle f, \psi^\cM_{j,n,y} \rangle$. Observe that
\begin{equation}\label{E:Transition}
\begin{split}
\widehat{h}_{j,n-1}[p]&= \widehat{f}[p]\widehat{\psi}_j^*(2^{-n+1}\lambda_p)\\
 & = \widehat{f}[p]\widehat{a}_j^*(2^{-n}\lambda_p)\widehat{\phi}^*(2^{-n}\lambda_p)\\
 &= \widehat{a}_j^*(2^{-n}\lambda_p)\widehat{h}_{0,n}[p].
\end{split}
\end{equation}
This shows that $h_{j,n-1}$, i.e. the continuous wavelet frame coefficients at level $n-1$ and band $j$, can be obtained from $h_{0,n}$, i.e. the low frequency coefficients at level $n$, by ``convolving'' $h_{0,n}$ with the masks $a_j$. As further discussed below, we can understand discrete function $f_G$ on a graph (which is a certain discretization of $\cM$) as a sampling of the underlying function $f$ by sampling $h_{0,n}(y)$ at certain scale $n>0$. Then, the discrete wavelet frame transforms can be simply defined as ``convolutions'' of $f_G$ with filters $\{a_j\}$.

\subsection{Discrete Tight Wavelet Frame Transforms on Graphs}
We denote a graph as $G:=\{E, V, w\}$, where $V:=\{v_k\in\cM: k=1,\ldots,K\}$ is a discretization of a given manifold $\cM$, $E\subset V\times V$ is an edge set, and $w: E \mapsto\R^+$ denotes a weight function. In this paper, we choose the following commonly used weight function $$w(v_k,v_{k'}):=e^{-\|v_k-v_{k'}\|_2^2/\sigma},\quad \sigma>0.$$ Let $A:=(a_{k,k'})$ be the adjacency matrix
\begin{equation*}
a_{k,k'}:=
\begin{cases}
w(v_{k},v_{k'}) & \mbox{if } v_{k} \mbox{ and } v_{k'} \mbox{ are connected by an edge in } E\\
0 & \mbox{otherwise,}
\end{cases}
\end{equation*}
and $D:=\mbox{diag}\{d[1], d[2], \ldots, d[K]\}$ where $d[k]$ is the degree of node $v_k$ defined by $d[k]:=\sum_{k'}a_{k,k'}$. Let $\cL_{0}$ be the (unnormalized) graph Laplacian, which takes the following form
\begin{equation*}
\cL_{0}:=D-A.
\end{equation*}
In the literature, normalized graph Laplacians were also considered $$\cL_{1}:=I-D^{-1}A\quad\mbox{or}\quad\cL_{2}:=I-D^{-1/2}AD^{1/2}.$$ Here, we shall use $\cL$ to denote one of the above three graph Laplacians. In our numerical computations, we will use the unnormalized graph Laplacian. The consistency of the graph Laplacian to the Laplace-Beltrami operator was studied in \cite{belkin2005towards,hein2005graphs,gine2006empirical}.

Denote $\{(\lambda_k, u_k)\}_{k=0}^{K-1}$ the set of pairs of eigenvalues and eigenfunctions of $\cL$. Assuming the graph is connected, then we have $0=\lambda_0<\lambda_1\le\lambda_2\le\cdots\le\lambda_{K-1}$. The eigenfunctions form an orthonormal basis for all functions on the graph: $$\langle u_k, u_{k'}\rangle:=\sum_{n=1}^K u_k[n]u_{k'}[n]=\delta_{k,k'}.$$ Let $f_{G}: V\mapsto \R$ be a function on the graph $G$. Then its Fourier transform is given by $$\widehat{f_G}[k]:=\sum_{n=1}^K {f}_G[n]u_k[n].$$

\subsection*{Transition from continuum to discrete} Suppose $G$ is a certain discretization of $\cM$ with $V\subset\cM$. Given a function $f_G$ on graph $G$, we assume that $f_G$ is sampled from the underlying function $f: \cM\mapsto\R$ by
$$f_G[k]:=\langle f,\phi_{N,v_k} \rangle,\quad v_k\in V,$$ with the dilation scale $N$ being the smallest integer such that $$\lambda_{\max}:=\lambda_{K-1}\le 2^{N}\pi.$$ Note that the scale $N$ is selected such that $2^{-N}\lambda_k\in[0,\pi]$ for $0\le k\le K-1$.

Based on our earlier observation \eqref{E:Transition}, we define the \textit{discrete $L$-level tight wavelet frame decomposition} as $$\bm{W}f_G:=\left\{W_{j,l}{f_G}: (j,l)\in\mathbb{B}\right\}$$ with
\begin{equation}\label{D:WT:IndexSet}
\mathbb{B}:=\{(1,1),(2,1),\ldots,(r,1),(1,2),\ldots,(r,L)\}\cup\{(0,L)\}
\end{equation}
and
\begin{equation}\label{D:TightFrame:Discrete}
\widehat{W_{j,l}{f_G}}[k]:=
\begin{cases}
\widehat{a}_{j}^*(2^{-N}\lambda_k)\widehat{{f_G}}[k] & l=1,\\
\widehat{a}_{j}^*(2^{-N+l-1}\lambda_k)\widehat{a}_{0}^*(2^{-N+l-2}\lambda_k)\cdots\widehat{a}_{0}^*(2^{-N}\lambda_k)\widehat{{f_G}}[k] & 2\le l\le L.
\end{cases}
\end{equation}
The index $j$ denotes the band of the transform with $j=0$ the low frequency component and $1\le j\le r$ the high frequency components. The index $l$ denotes the level of the transform.

Given a graph function $f_G$, let $\bm{\alpha}:=\bm{W}f_G:=\{\alpha_{j,l}:\ (j,l)\in\mathbb{B}\}$, with $\alpha_{j,l}:=W_{j,l}{f_G}$, be its tight wavelet frame coefficients. We denote the \textit{discrete tight wavelet frame reconstruction} as $\bm{W}^\top\bm{\alpha}$, which is defined by the following iterative procedure in frequency domain
\begin{equation}\label{D:TightFrame:Discrete:Reconstruction}
\begin{split}
\widehat\alpha_{0,l-1}[k]=\sum_{j=0}^r\widehat a_{j}(2^{-N+l-1}\lambda_k)\widehat{\alpha}_{j,l}[k]\qquad\mbox{for } l=L, L-1, \ldots, 1,
\end{split}
\end{equation}
where $\alpha_{0,0}:=\bW^\top\bm{\alpha}$ is the reconstructed graph data from $\bm{\alpha}$. Note that $\bW$ is obviously a linear transformation, and it is easy to verify that the linear transformation $\bW^\top$ defined by \eqref{D:TightFrame:Discrete:Reconstruction} is indeed the adjoint of $\bW$ satisfying $\langle \bW f_G, \bm{\alpha}\rangle=\langle f_G, \bW^\top\bm{\alpha}\rangle$ for all $f_G$ and $\bm{\alpha}$.

Perfect reconstruction from $\bm{\alpha}$ to $f_G$ through $\bm{W}^\top$ can be verified as long as the masks $\{a_j:\ 0\le j\le r\}$ satisfy the condition \eqref{UEP:1} in Theorem \ref{Theorem:TightFrame:Manifold}. We have the following theorem stating that $\{a_j: 0\le j\le r\}$ generates a discrete tight wavelet frame on graphs.
\begin{theorem}\label{Theorem:TightFrame:Graph}
Given a set of masks $\{a_j: 0\le j\le r\}\subset\ell_0(\Z)$, suppose condition \eqref{UEP:1} is satisfied. Then, the discrete tight wavelet frame transforms $\bm{W}$ and $\bm{W}^\top$ defined on $G=\{E, V, w\}$ by \eqref{D:TightFrame:Discrete} and \eqref{D:TightFrame:Discrete:Reconstruction} satisfy $$\bm{W}^\top \bm{W}f_G=f_G,\quad\mbox{for all } f_G: V\mapsto \R.$$
\end{theorem}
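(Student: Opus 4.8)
The plan is to work entirely in the Fourier domain of the graph Laplacian $\cL$, i.e. to expand everything in the orthonormal eigenbasis $\{u_k\}_{k=0}^{K-1}$, and to show that $\widehat{\bm{W}^\top\bm{W}f_G}[k]=\widehat{f_G}[k]$ for every $k$. Since the whole transform is diagonalized by $\{u_k\}$ — both $\bm{W}$ and $\bm{W}^\top$ act by multiplication with (conjugates of) the trigonometric polynomials $\widehat a_j$ evaluated at the scaled eigenvalues $2^{-N+l-1}\lambda_k$ — it suffices to verify the reconstruction identity scalar by scalar for each fixed frequency index $k$, and in fact the argument reduces to a one-variable statement about the numbers $\xi_l:=2^{-N+l-1}\lambda_k$.

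First I would record the key algebraic consequence of \eqref{UEP:1}: for any $\xi\in\R$, $\sum_{j=0}^r\widehat a_j(\xi)\,\overline{\widehat a_j(\xi)}=\sum_{j=0}^r|\widehat a_j(\xi)|^2=1$. Next I would substitute the decomposition formulas \eqref{D:TightFrame:Discrete} into the reconstruction recursion \eqref{D:TightFrame:Discrete:Reconstruction} and peel off one level at a time, starting from $l=L$ and working down to $l=1$. Concretely, fix $k$ and abbreviate $b_l:=\widehat a_0^*(2^{-N}\lambda_k)\cdots\widehat a_0^*(2^{-N+l-2}\lambda_k)$ for $l\ge 2$ and $b_1:=1$, so that $\widehat\alpha_{j,l}[k]=\widehat a_j^*(2^{-N+l-1}\lambda_k)\,b_l\,\widehat{f_G}[k]$. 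Plugging the level-$L$ coefficients into \eqref{D:TightFrame:Discrete:Reconstruction} gives
\begin{equation*}
\widehat\alpha_{0,L-1}[k]=\Big(\sum_{j=0}^r\widehat a_j(2^{-N+L-1}\lambda_k)\,\widehat a_j^*(2^{-N+L-1}\lambda_k)\Big)\,b_L\,\widehat{f_G}[k]=b_L\,\widehat{f_G}[k],
\end{equation*}
using \eqref{UEP:1}. Now observe that $b_L=\widehat a_0^*(2^{-N+L-2}\lambda_k)\,b_{L-1}$, and that at the next step of the recursion the band-$0$ contribution at level $L-1$ is $\widehat a_0(2^{-N+L-2}\lambda_k)\,\widehat\alpha_{0,L-1}[k]$; combining these two facts shows the recursion is exactly set up so that $\widehat a_0(2^{-N+L-2}\lambda_k)$ cancels the conjugate factor sitting inside $b_L$, leaving $b_{L-1}\,\widehat{f_G}[k]$ plus the high-band contributions at level $L-1$, which again collapse by \eqref{UEP:1}. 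An easy induction on the decreasing level index then yields $\widehat\alpha_{0,l-1}[k]=b_l\,\widehat{f_G}[k]$ for all $l$, and in particular $\widehat\alpha_{0,0}[k]=b_1\,\widehat{f_G}[k]=\widehat{f_G}[k]$. Since this holds for every $k$ and $\{u_k\}$ is a basis, $\bm{W}^\top\bm{W}f_G=f_G$.

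The argument has no real obstacle — no decay or regularity assumptions are needed because the graph has only finitely many eigenvalues, so all sums are finite and convergence issues disappear (this is precisely why Theorem \ref{Theorem:TightFrame:Graph} is cleaner than Theorem \ref{Theorem:TightFrame:Manifold}). The only point demanding care is bookkeeping: keeping track of which scaled eigenvalue $2^{-N+l-1}\lambda_k$ appears at each level, and checking that the telescoping of the $\widehat a_0$ and $\widehat a_0^*$ factors across consecutive levels is exact, i.e. that the products defining $b_l$ in the decomposition match the products generated by the reconstruction recursion. I would state the induction hypothesis carefully as $\widehat\alpha_{0,l-1}[k]=b_l\,\widehat{f_G}[k]$, verify the base case $l=L$ as above, and in the inductive step use \eqref{D:TightFrame:Discrete} to write $\widehat\alpha_{j,l}[k]=\widehat a_j^*(2^{-N+l-1}\lambda_k)b_l\widehat{f_G}[k]$ for $1\le j\le r$ together with the just-established value of $\widehat\alpha_{0,l}[k]$ for the $j=0$ term, then apply \eqref{UEP:1} at the single point $\xi=2^{-N+l-1}\lambda_k$ and the identity $b_l=\widehat a_0^*(2^{-N+l-2}\lambda_k)b_{l-1}$ to conclude $\widehat\alpha_{0,l-1}[k]=b_{l-1}\widehat{f_G}[k]$... wait, one must be slightly careful that the index shift lands on $b_l$ rather than $b_{l-1}$; redoing the recursion \eqref{D:TightFrame:Discrete:Reconstruction} at level $l$ gives $\widehat\alpha_{0,l-1}[k]=\sum_{j=0}^r\widehat a_j(2^{-N+l-1}\lambda_k)\widehat\alpha_{j,l}[k]$, and feeding in $\widehat\alpha_{j,l}[k]=\widehat a_j^*(2^{-N+l-1}\lambda_k)b_l\widehat{f_G}[k]$ for all $j$ (note the $j=0$ case of \eqref{D:TightFrame:Discrete} is consistent with this) produces $b_l\widehat{f_G}[k]$ by \eqref{UEP:1}, so actually the clean statement is $\widehat\alpha_{0,l}[k]=b_{l+1}\widehat{f_G}[k]$ for $0\le l\le L-1$, giving $\widehat\alpha_{0,0}[k]=b_1\widehat{f_G}[k]=\widehat{f_G}[k]$ as desired. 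That final indexing check is the one thing I would write out explicitly rather than leave to the reader.
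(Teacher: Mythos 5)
Your proof is correct and follows essentially the same route as the paper: expand in the eigenbasis of $\cL$, substitute the decomposition coefficients into the reconstruction recursion, and collapse each level by applying \eqref{UEP:1} at the point $2^{-N+l-1}\lambda_k$, with the low-pass factors telescoping across levels. The paper writes this out only for $L=2$ and declares the general case analogous, whereas you carry out the general induction with the products $b_l$; the indexing you settle on, $\widehat\alpha_{0,l}[k]=b_{l+1}\widehat{f_G}[k]$, is the right one.
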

\begin{proof}
For simplicity, we prove the theorem for the case $L=2$. The proof for general $L$ is analogous. For $L=2$, we have $\bm{\alpha}:=\left\{\alpha_{j,l}: (j,l)\in\mathbb{B}\right\}$ with $\alpha_{j,l}:=W_{j,l}{f_G}$ and $\mathbb{B}=\{(j,l) : 1\le j\le r, l=1,2\}\cup\{(0,2)\}$.
By \eqref{D:TightFrame:Discrete}, we have $$\widehat\alpha_{j,2}[k]=\widehat{a}_{j}^*(2^{-N+1}\lambda_k)\widehat{a}_{0}^*(2^{-N}\lambda_k)\widehat{{f_G}}[k].$$ Then, letting $l=2$ in \eqref{D:TightFrame:Discrete:Reconstruction}, we have
\begin{equation*}
\begin{split}
\widehat\alpha_{0,1}[k]&=\sum_{j=0}^r\widehat a_{j}(2^{-N+1}\lambda_k)\widehat{\alpha}_{j,2}[k]\cr
 &=\sum_{j=0}^r\left|\widehat a_{j}(2^{-N+1}\lambda_k)\right|^2\widehat{a}_{0}^*(2^{-N}\lambda_k)\widehat{{f_G}}[k]\cr
 &=\widehat{a}_{0}^*(2^{-N}\lambda_k)\widehat{{f_G}}[k],
\end{split}
\end{equation*}
where the last identity follows from \eqref{UEP:1}. Letting $l=1$ in \eqref{D:TightFrame:Discrete:Reconstruction} and similarly, we have
\begin{equation*}
\begin{split}
\widehat\alpha_{0,0}[k]&=\sum_{j=0}^r\widehat a_{j}(2^{-N}\lambda_k)\widehat{\alpha}_{j,1}[k]\cr
 &=\sum_{j=0}^r\left|\widehat a_{j}(2^{-N}\lambda_k)\right|^2\widehat{{f_G}}[k]\cr
 &=\widehat{{f_G}}[k].
\end{split}
\end{equation*}
This shows that $\bW^\top\bW f_G=f_G$.
\end{proof}

We present some sets of masks satisfying \eqref{UEP:1}, and hence generate discrete tight wavelet frames on graphs according to Theorem \ref{Theorem:TightFrame:Graph}. Note that these masks are modified from those given by \eqref{Masks:BSplineTightFramelets} by removing the complex factors.

\begin{example}\label{example:masks}
We present three sets of masks that satisfy \eqref{UEP:1} and hence generate discrete tight wavelet frames on graphs. We shall refer to them as the ``Haar'', ``linear'' and ``quadratic'' tight wavelet frame systems, and the masks as the ``Haar'', ``linear'' and ``quadratic'' masks. These masks slice the spectrum in a different but similar way as shown in Figure \ref{Fig:Masks}.
\begin{enumerate}
\item \textbf{Haar.} $$\widehat a_0(\xi) = \cos(\xi/2)\quad\mbox{and}\quad \widehat a_1(\xi)=\sin(\xi/2).$$
\item \textbf{Linear.} $$\widehat a_0(\xi) = \cos^2(\xi/2),\quad \widehat a_1(\xi)=\frac{1}{\sqrt2}\sin(\xi)\quad\mbox{and}\quad \widehat a_2(\xi)=\sin^2(\xi/2).$$
\item \textbf{Quadratic.}
 \begin{equation*}
 \begin{split}
  &\widehat a_0(\xi) = \cos^3(\xi/2),\quad \widehat a_1(\xi)=\sqrt{3}\sin(\xi/2)\cos^2(\xi/2)\\
  &\widehat a_2(\xi)=\sqrt{3}\sin^2(\xi/2)\cos(\xi/2)\quad\mbox{and}\quad \widehat a_3(\xi)=\sin^3(\xi/2).
 \end{split}
 \end{equation*}
\end{enumerate}
\end{example}

\begin{figure}[htp]
\centering
    \includegraphics[width=6.0in]{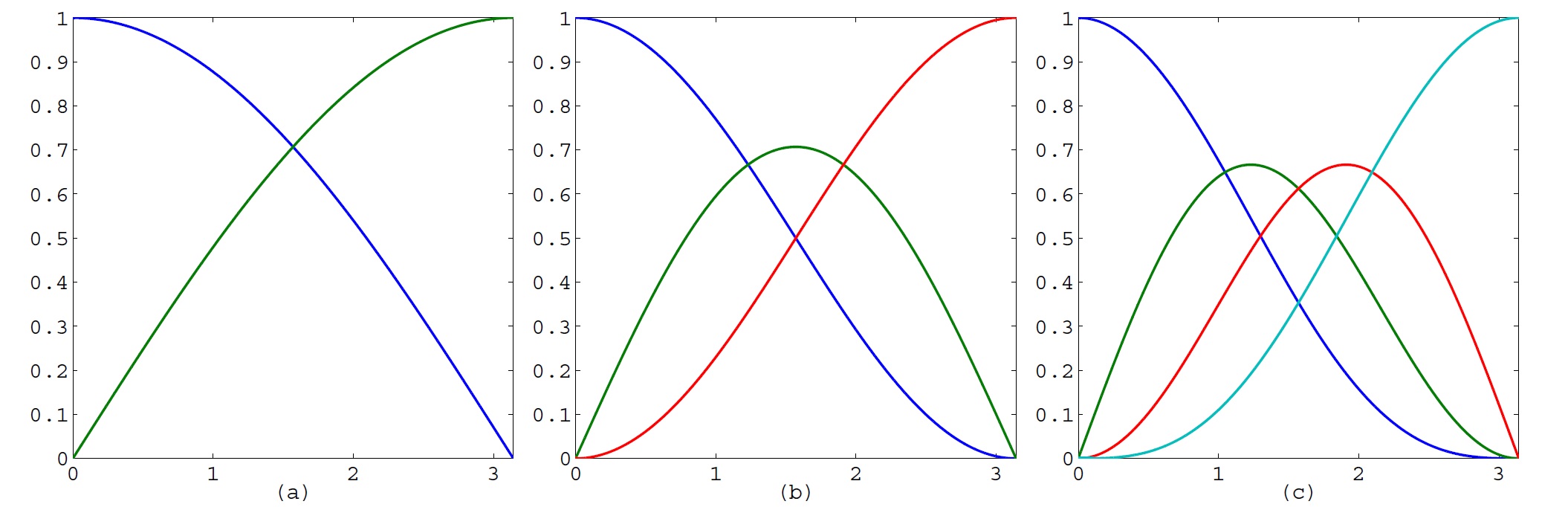}
\caption{Plots of $\widehat a_j(\xi)$ for $\xi\in[0, \pi]$. Plot (a) presents the ``Haar'' masks with $\widehat a_0$ in blue and $\widehat a_1$ in green. Plot (b) presents the ``linear'' masks with $\widehat a_0$ in blue, $\widehat a_1$ in green and $\widehat a_2$ in red. Plot (c) presents the ``quadratic'' masks with $\widehat a_0$ in blue, $\widehat a_1$ in green, $\widehat a_2$ in red and $\widehat a_3$ in light blue.}\label{Fig:Masks}
\end{figure}

\subsection{Fast Tight Wavelet Frame Transform on Graphs (WFTG)}

The discrete tight wavelet frame transforms given by \eqref{D:TightFrame:Discrete} and \eqref{D:TightFrame:Discrete:Reconstruction} require the full set of eigenvectors and eigenvalues of the graph Laplacian, which is computationally expensive to obtain for large graphs. A solution to such computation challenge is to use polynomial approximation of the masks, such as the Chebyshev polynomials, so that eigenvalue decomposition of the graph Laplacian is not needed. This approach was first proposed by \cite{hammond2011wavelets}. However, the masks $\widehat{a}_j(\xi)$ that we use, as well as those constructed in the literature from the UEP, are trigonometric polynomials (such as \eqref{Masks:BSplineTightFramelets}). Therefore, $\widehat{a}_j(\xi)$ can be accurately approximated by \textit{low-degree} Chebyshev polynomials (see e.g. \cite{mason2010chebyshev}) which significantly reduces the computation cost of the decomposition and reconstruction algorithms. In this section, we describe the details of the fast tight wavelet frame transform on graphs (WFTG) based on polynomial approximation. Comparisons of our approach with that of \cite{hammond2011wavelets} will be presented later in Section \ref{Sec:WFTG} and Section \ref{Sec:Applications}.

We start with approximating a given function $g(\xi)$ with $\xi\in[0, \pi]$ using Chebyshev polynomials $\{T_k(\xi): k=0,1,\ldots\}$. Recall that the Chebyshev polynomials on $[0, \pi]$ are defined iteratively by
\begin{equation}\label{D:Chebyshev}
T_0=1,\quad T_1(\xi)=\frac{\xi-\pi/2}{\pi/2},\quad T_k(\xi)=\frac{4}{\pi}(\xi-\pi/2)T_{k-1}(\xi)-T_{k-2}(\xi),\ \mbox{for } k=2,3,\ldots.
\end{equation}
Then, given $g(\xi)$, we have the following expansion
\begin{equation*}
g=\frac12 c_{0}+\sum_{k=1}^\infty c_{k}T_k,\quad\mbox{where}\quad c_{k}=\frac2{\pi}\int_0^\pi\cos(k\theta)g\left(\frac{\pi}2(\cos(\theta)+1)\right)d\theta.
\end{equation*}
When $g(\xi)$ is smooth, we can use partial sums to accurately approximate $g(\xi)$:
$$g(\xi)\approx\cT^n(\xi)=\frac12 c_{0}+\sum_{k=1}^{n-1} c_{k}T_k.$$ We denote the Chebyshev polynomial approximation of $\widehat a_j(\xi)$ as $$\widehat a_j(\xi)\approx\cT_j^n(\xi)=\frac12 c_{j,0}+\sum_{k=1}^{n-1} c_{j,k}T_k,\qquad c_{j,k}=\frac2{\pi}\int_0^\pi\cos(k\theta)\widehat a_j\left(\frac{\pi}2(\cos(\theta)+1)\right)d\theta.$$ We denote the Chebyshev approximation of $\widehat a_j^*$ as $\cT_j^{n*}$. Since the masks $\{\widehat a_j\}$ we will be using are real-valued (those in Example \ref{example:masks}), we have $\cT_j^{n}=\cT_j^{n*}$.

Let us see how to speed up the transformation \eqref{D:TightFrame:Discrete} using the above approximation. The Laplacian $\cL$ admits the eigenvalue decomposition $\cL=U\Lambda U^\top$ where $\Lambda=\mbox{diag}\{\lambda_0,\lambda_1,\ldots,\lambda_{K-1}\}$ and columns of $U$ are the eigenvectors. Then we can rewrite \eqref{D:TightFrame:Discrete} in the following matrix form in physical domain:
\begin{equation}\label{D:TightFrame:Discrete:Physical}
{W}_{j,l}{f_G}=
\begin{cases}
U\widehat{a}_{j}^*(2^{-N}\Lambda)U^\top {f_G} & l=1,\\
U\widehat{a}_{j}^*(2^{-N+l-1}\Lambda)\widehat{a}_{0}^*(2^{-N+l-2}\Lambda)\cdots\widehat{a}_{0}^*(2^{-N}\Lambda)U^\top {f_G} & l\ge2.
\end{cases}
\end{equation}
Here,
\begin{equation*}
\widehat{a}_{j}^*(\gamma\Lambda):=\mbox{diag}\{\widehat{a}_{j}^*(\gamma\lambda_0),\widehat{a}_{j}^*(\gamma\lambda_1),\ldots,\widehat{a}_{j}^*(\gamma\lambda_{K-1})\}.
\end{equation*}
If we substitute the approximation $\widehat a_j(\xi)\approx\cT_j^n(\xi)$ in \eqref{D:TightFrame:Discrete:Physical} and use the fact that $\cT_j^n$ are polynomials, we obtain the WFTG:
\subsection*{Fast Tight Wavelet Frame Transform on Graphs (WFTG)} $\bm{W}f_G=\{W_{j,l}{f_G}: (j,l)\in\mathbb{B}\}$ where
\begin{equation}\label{D:TightFrame:Fast}
{W}_{j,l}{f_G}:=
\begin{cases}
\cT_j^{n*}(2^{-N}\cL) {f_G} & l=1,\\
\cT_j^{n*}(2^{-N+l-1}\cL)\cT_0^{n*}(2^{-N+l-2}\cL)\cdots \cT_0^{n*}(2^{-N}\cL) {f_G} & l\ge2.
\end{cases}
\end{equation}
Reconstruction transform $\bm{W}^\top$ can be defined similarly and we have $\bm{W}^\top\bm{W}\approx\bm{I}$. Note that, for computational efficiency, the operations $\cT_j^*(s\cL){f_G}$ for decomposition and $\cT_j(s\cL){f_G}$ for reconstruction are computed via the iterative definition of the Chebyshev polynomial \eqref{D:Chebyshev}. Therefore, in WFTG, only matrix-vector multiplications are involved.

\begin{remark}
As we mentioned earlier, since the masks $\widehat a_j(\xi)$ are trigonometric polynomials, they can be accurately approximated by low-degree Chebyshev polynomials (see Table \ref{Table:Chebyshev} for example). In our simulations, we choose $n=8$ which is sufficient for the applications we considered. Note that, if a higher order B-spline tight wavelet frame system is used, Chebyshev polynomials with a higher degree may be needed to achieve a given approximation accuracy. However, just as in image processing, only the lower order systems are mostly used because they provide a better balance between quality and computation efficiency.
\end{remark}

\subsection{Numerical Simulations of WFTG}\label{Sec:WFTG}

Given a graph $G=\{E, V, w\}$ with $|V|=K$, we shall use the unnormalized graph Laplacian $\cL=D-A$ as an example. We compute only the largest eigenvalue $\lambda_{K-1}$, and choose the initial dilation scale as $N=\log_2(\frac{\lambda_{K-1}}{\pi})$.

In our simulations, we choose the ``linear'' tight wavelet frame system given in Example \ref{example:masks}, i.e.
$$\widehat a_0(\xi)=\cos^2(\xi/2),\quad \widehat a_1(\xi)=\frac{1}{\sqrt2}\sin(\xi)\quad\mbox{and}\quad \widehat a_2(\xi)=\sin^2(\xi/2).$$ We present approximation errors of $\cT_j^n$ to $\widehat a_j$ for several values of $n$ in the following Table \ref{Table:Chebyshev}, where one can easily see that the $\widehat a_j$ can be accurately approximated by low-degree Chebyshev polynomials. In our simulations, we fix $n=8$, i.e. we use Chebyshev polynomials of degree 7 to approximate the masks given in Example \ref{example:masks}.

\begin{table}[htp]
\caption{{Approximation error $\|\cT_j^n-\widehat a_j\|_\infty$ with $\widehat a_j$ the `linear'' masks given in Example \ref{example:masks}.}}\label{Table:Chebyshev}
\centering
\begin{tabular}{c|c|c|c}
\hline
\multicolumn{1}{c|}{Errors} & \multicolumn{1}{c|}{$\widehat a_0$} & \multicolumn{1}{c|}{$\widehat a_1$} &
\multicolumn{1}{c}{$\widehat a_2$}\\
\hline
 \multicolumn{1}{c|}{$n=4$}         & 2.273$\times 10^{-3}$ & 2.022$\times 10^{-2}$ & 2.273$\times 10^{-3}$ \\
 \multicolumn{1}{c|}{$n=5$}         & 2.273$\times 10^{-3}$ & 4.267$\times 10^{-4}$ & 2.273$\times 10^{-3}$ \\
 \multicolumn{1}{c|}{$n=6$}         & 3.417$\times 10^{-5}$ & 4.267$\times 10^{-4}$ & 3.417$\times 10^{-5}$ \\
 \multicolumn{1}{c|}{$n=7$}         & 3.417$\times 10^{-5}$ & 4.775$\times 10^{-6}$ & 3.417$\times 10^{-5}$ \\
 \multicolumn{1}{c|}{$n=8$}         & 3.762$\times 10^{-7}$ & 4.775$\times 10^{-6}$ & 3.762$\times 10^{-7}$ \\
 \hline
\end{tabular}
\end{table}

To illustrate the effects of the WFTG using the ``linear'' masks in Example \ref{example:masks}, we consider a graph $G=\{E, V, w\}$, with $V$ being sampled from a unit sphere. The number of vertices in $V$ is $16,728$. The adjacency matrix $A$ is generated using the weight function $w(v_i,v_j)=e^{-\|v_i-v_j\|_2^2/\sigma}$ for $v_i,v_j\in V$. In our experiments, we fix $\sigma=10$. Then we threshold $A$ to limit the number of nearest neighbors of each vertex to 10. The functions ${f_G}: V\mapsto \R$ are generated by mapping two images, ``Slope'' and ``Eric'', onto the graph $G$ (see Figure \ref{Fig:WFTG:GraphData}). We perform 4 levels of WFTG and use $n=8$ for the Chebyshev polynomial approximation of the masks. The tight wavelet frame coefficients ${W}_{j,l}f_G$ for $0\le j\le 2$ and $1\le l\le 4$ are shown in Figure \ref{Fig:WFTG:Decomp:Slope} and \ref{Fig:WFTG:Decomp:Eric}. It is worth noticing that the high frequency coefficients ${W}_{j,l}f_G$ for $j=1,2$ and $1\le l\le 4$ are very sparse, which is consistent with what we normally observe from wavelet frame coefficients of images. In other words, the proposed tight wavelet frames provide a \textit{sparse} representation for graph functions. Sparsity is crucial to many applications. In the next section, we shall consider denoising and semi-supervised clustering utilizing the sparse representation provided by the proposed WFTG.

\begin{figure}[htp]
\centering
    \includegraphics[width=4.0in]{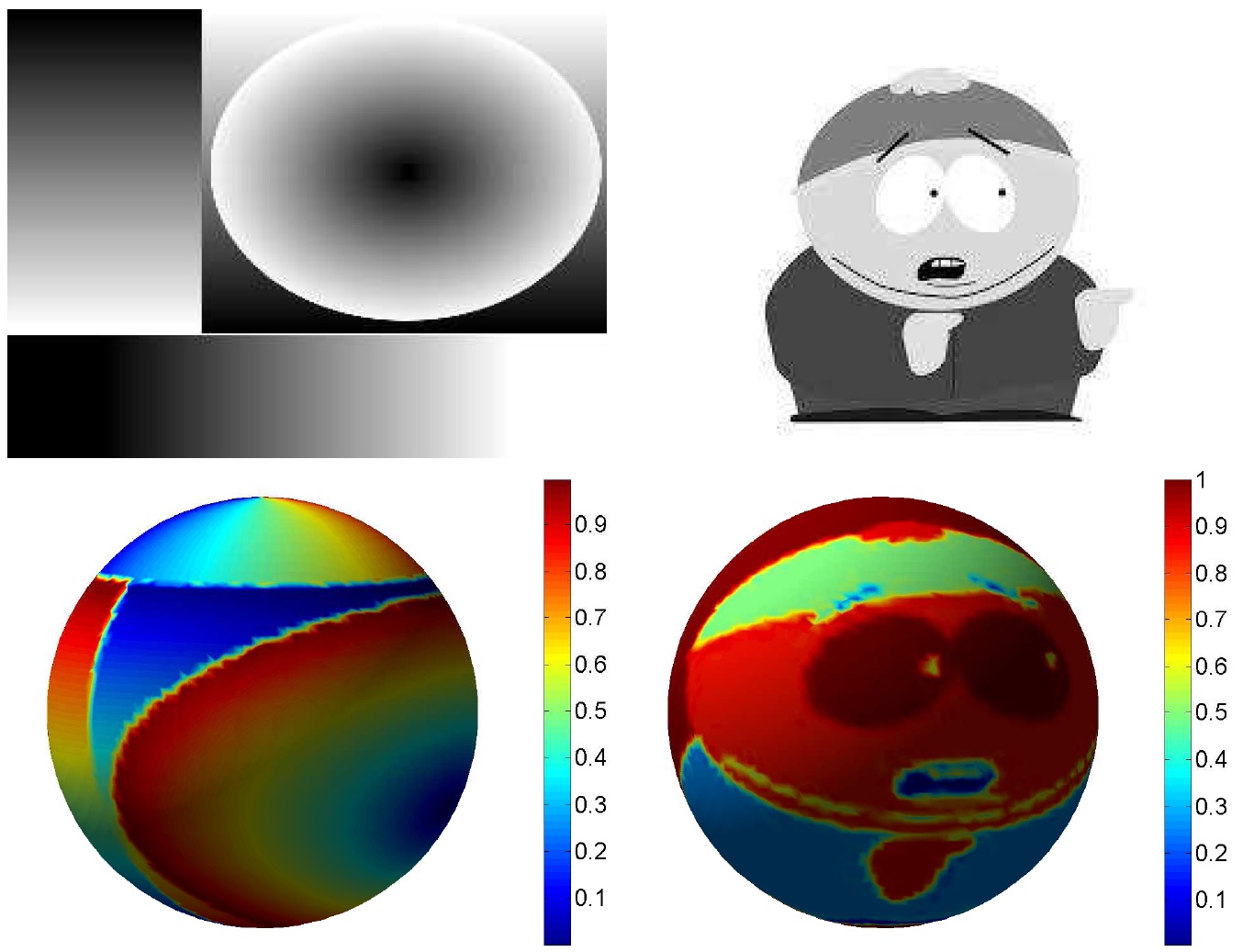}
\caption{This figure shows two images (first row), ``Slope'' and ``Eric'', that are mapped to the graph of the unit sphere to form the graph data ${f_G}$ (second row).}\label{Fig:WFTG:GraphData}
\end{figure}

\begin{figure}[htp]
\centering
    \includegraphics[width=6.0in]{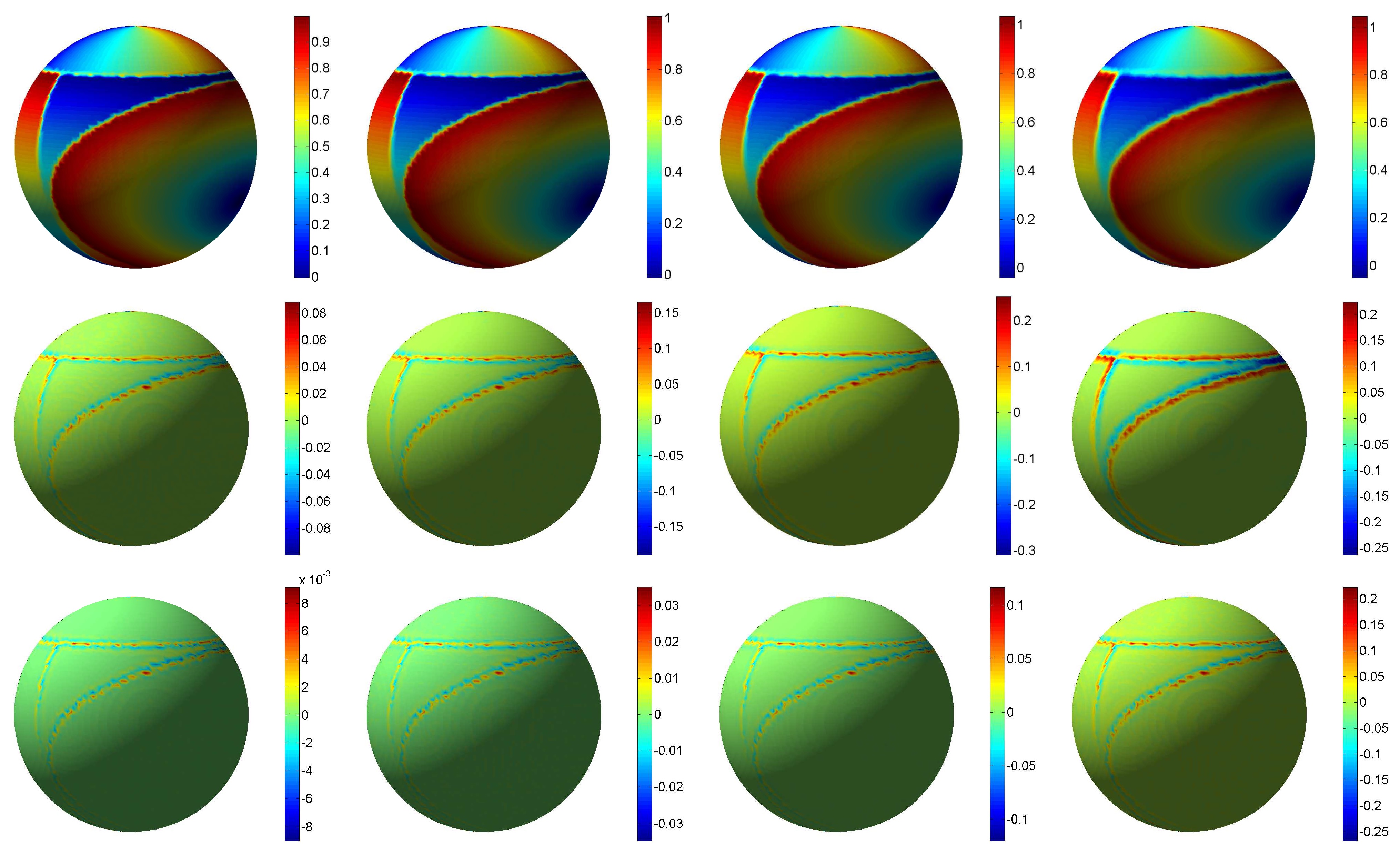}
\caption{This figure shows the images of the tight wavelet frame coefficients ${W}_{j,l}f_G$ for $0\le j\le 2$ (row 1-3) and $1\le l\le 4$ (column 1-4).}\label{Fig:WFTG:Decomp:Slope}
\end{figure}

\begin{figure}[htp]
\centering
    \includegraphics[width=6.0in]{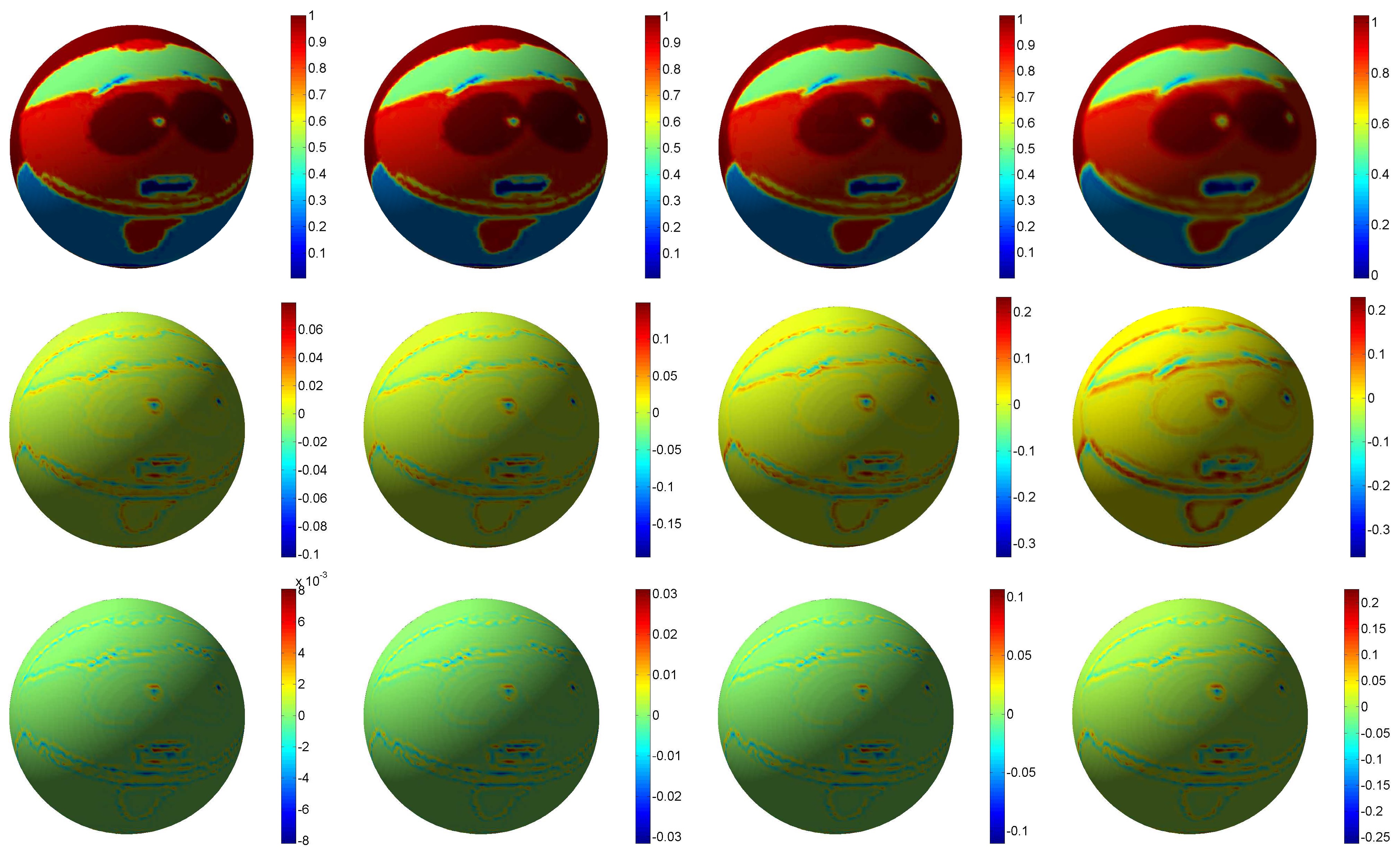}
\caption{This figure shows the images of the tight wavelet frame coefficients ${W}_{j,l}f_G$ for $0\le j\le 2$ (row 1-3) and $1\le l\le 4$ (column 1-4).}\label{Fig:WFTG:Decomp:Eric}
\end{figure}

Finally, we compare the computation efficiency of the WFTG with that of the spectral graph wavelet transform (SGWT) of \cite{hammond2011wavelets}, since SGWT is most related to the proposed WFTG. All simulations are conducted on a laptop with Intel(R) Core(TM) i7-4600U CPU (2.1GHz \& 2.7GHz) and 8GB RAM running Windows 7 (64-bit). The masks used for WFTG are those given in Example \ref{example:masks}. The codes for SGWT is downloaded from ``http://wiki.epfl.ch/sgwt". The number of scales of SGWT is chosen to be 4 and order of polynomial approximation is chosen to be 25 (by default). The specific graph wavelet used for SGWT is generated by a cubic spline (see \cite{hammond2011wavelets} for detail).

The computation time and reconstruction errors of a 4-level decomposition and reconstruction of the WFTG and SGWT are summarized in Table \ref{Table:Comparison:Efficiency}. For a similar reconstruction error, the WFTG using ``Haar" masks is faster than the SGWT; the WFTG using ``linear" masks is comparable in speed to SGWT; and the WFTG with ``quadratic" masks is slower. However, the WFTG is a more redundant transformation than the SGWT in general. For example, after 4 levels of decomposition, the size of the data in transform domain is 5 times of the size of the original data for ``Haar", 9 times for ``linear" and 13 times for ``quadratic", whereas the size of the data in the transform domain of SGWT is 5 times the size of the original data. Therefore, taking the amount of redundancy into account, WFTG is in fact faster than the SGWT in general. In Section \ref{Sec:Applications}, we will further show that the WFTG is more effective than the SGWT in graph data denoising and semi-supervised clustering, which indicates that the additional redundancy by the WFTG is beneficial in applications.

In practice, we often need to properly balance between computation efficiency and redundancy when choosing a system to use for a specific application. Higher redundancy may lead to better results while sacrificing efficiency. Taking the denoising problem in Section \ref{SubS:Denoising} as an example, we can see from Table \ref{Table:Denoising} that using ``quadratic" does not significantly improve denoising quality while we generally face a 50\% increase in total computation time. Therefore, taking both efficiency and redundancy into account, one may stick to ``Haar" and ``Linear" or other set of masks with comparable redundancy in applications.

The computation efficiency of the WFTG is mainly due to the efficient approximation of the masks by Chebyshev polynomial, because the masks constructed by our method are analytic which leads to geometric decay of the coefficients of the Chebyshev polynomial approximation (see e.g. \cite{mason2010chebyshev}). In other words, the masks $\widehat{a}_j(\xi)$ can be accurately approximated by \textit{low-degree} Chebyshev polynomials. In addition, since tight frames are used by WFTG, the reconstruction step is straightforward, and we do not need to solve a least-squares problem to find the inverse as what is used by the SGWT.

\begin{table}[htp]
\tiny\caption{{Comparison of computation efficiency of WFTG and SGWT \cite{hammond2011wavelets}. The reconstruction time is in seconds and errors are measured by the $\ell_\infty$-norm.}}\label{Table:Comparison:Efficiency}
\centering
\begin{tabular}{|l||c c|c c|c c|c c|}
\hline
\multicolumn{1}{|l||}{Image} &
\multicolumn{2}{c|}{WFTG (Haar)} & \multicolumn{2}{c|}{WFTG (Linear)} & \multicolumn{2}{c|}{WFTG (Quadratic)} & \multicolumn{2}{c|}{SGWT}
\\
\multicolumn{1}{|l||}{Name} &
\multicolumn{1}{c}{Time (sec.)} & \multicolumn{1}{c|}{Error} &
\multicolumn{1}{c}{Time (sec.)} & \multicolumn{1}{c|}{Error} &
\multicolumn{1}{c}{Time (sec.)} & \multicolumn{1}{c|}{Error} &
\multicolumn{1}{c}{Time (sec.)} & \multicolumn{1}{c|}{Error}
\\
\hline
Slope    & 0.246  & $3.80\times 10^{-6}$ & 0.450  & $7.16\times 10^{-6}$ & 0.773  & $3.80\times 10^{-6}$ & 0.519  & $5.02\times 10^{-6}$ \\
Eric     & 0.262  & $3.81\times 10^{-6}$ & 0.462  & $7.63\times 10^{-6}$ & 0.782  & $3.81\times 10^{-6}$ & 0.509  & $1.21\times 10^{-6}$ \\
\hline
\end{tabular}
\end{table}

\section{Applications: Denoising and Semi-Supervised Clustering}\label{Sec:Applications}

In the previous section, we discussed how we can define sparse representations on graphs by tight wavelet frames, and how to efficiently compute the decomposition and reconstruction transforms using polynomial approximation of the masks. There are many potential applications of the proposed representation. For example, one may consider a general linear inverse problem on graphs whenever the solution we seek is sparse under the proposed tight wavelet frame transform. In this section, we shall focus on the applications of the WFTG to graph data denoising and semi-supervised clustering.

Given a graph data $f$, its Euclidean $\ell_p$-norm ($p=1$ or 2) is defined by $$\|f\|_p:=\left(\sum_{k=1}^{K}|f[k]|^p\right)^{\frac1p},$$ and its graph-$\ell_p$-norm is defined by $$\|f\|_{p,G}:=\left(\sum_{k=1}^{K}|f[k]|^pd[k]\right)^{\frac1p},$$ were $d[k]$ is the degree of the node $v_k\in V$. Let $D:=\mbox{diag}\{d[1], d[2], \ldots, d[K]\}$. Obviously, we have $\|D^{1/p}f\|_p=\|f\|_{p,G}$ with $D^s:=\mbox{diag}\{d^s[1], d^s[2], \ldots, d^s[K]\}$ for $s\in\R$. Note that we have dropped the subscript ``$G$" of $f_G$ that we used earlier to differentiate functions on graphs from functions on manifolds, since we shall stay in the discrete setting throughout this section.

What is crucial to the success of the proposed WFTG in solving denoising and clustering problems is the sparse approximation provided by the underlying tight frame system. Because of this, the models we propose for both of the problems take the following generic form
\begin{equation}\label{Model:Generic}
\min_{u}\ \nu\|\bW u\|_{1,G}+F(u,f),
\end{equation}
where $F(\cdot,\cdot)$ is some fidelity function that takes different forms for different applications, and $f$ is the observed graph data. The use of the $\ell_1$-norm of $\bW u$ is because the desirable solution we seek can be sparsely approximated by the underlying wavelet frame system.

Note that model \eqref{Model:Generic} resembles some variational models used in the literature for image and graph data processing and analysis, such as the total variation (TV) based models \cite{ROF,Gilboa2008,GO,bertozzi2012diffuse,merkurjev2013mbo,merkurjev2014global}. However, the modeling philosophy of \eqref{Model:Generic}, as well as most wavelet frame based models for image processing, is based on sparsity of the coefficients in transform domain, whereas for variational models such as TV models, data to be recovered are characterized by carefully chosen function spaces. Nonetheless, comparing most wavelet (frame) based models and variational models, it seems that the major difference between them is the choice of the underlying transformation that maps the data to a certain domain where it can be sparsely approximated.

On flat domains, fundamental connections between wavelet frame based approach (taking the form of \eqref{Model:Generic} with a specific fidelity function $F(\cdot,\cdot)$) and variational methods were established in \cite{CDOS2011,CDS2014}. In particular, connections to the total variation model \cite{ROF} was established in \cite{CDOS2011}, and to the Mumford-Shah model \cite{mumford1989optimal} was established in \cite{CDS2014}. Furthermore, in \cite{DJS2013}, the authors established a generic connection between iterative wavelet frame shrinkage and general nonlinear evolution PDEs. The series of three papers \cite{CDOS2011,DJS2013,CDS2014} showed that wavelet frame transforms are discretizations of differential operators in both variational and PDE frameworks, and such discretization is superior to some of the traditional finite difference schemes for image restoration.

One interesting question to ask is whether \eqref{Model:Generic} is a discrete form of a certain variational model, or in other words, whether the WFTG discretizes differential operators on manifolds? If so, what type of convergence can we establish between the discrete model \eqref{Model:Generic} and its corresponding variational model? We do not attempt to answer these questions in this paper since a rigorous justification may be rather technical. But we observe from Figure \ref{Fig:WFTG:Decomp:Slope} and \ref{Fig:WFTG:Decomp:Eric} that the WFTG does behave like differential operators.

Finally, we note that the quality of model \eqref{Model:Generic} for a given application depends on the quality of the transformation $\bW$. Taking denoising as an example, redundant systems are often better than non-redundancy systems. Furthermore, two different redundant systems may produce very different results even when the same model as \eqref{Model:Generic} is used. In this section, we will compare the WFTG and the SGWT for graph denoising and graph clustering problems. Our results will show that WFTG is significantly more effective than the SGWT at least for denoising and clustering problems. Furthermore, we will compare our proposed graph clustering model based on the WFTG with some state-of-the-art clustering models using two real data sets.

\subsection{Denoising}\label{SubS:Denoising}

Denoising on graphs may not be a problem as important as image denoising. However, a good representation together with the modeling based on such representation should be robust to noise in order for them to be useful in practice. Therefore, denoising is a necessary test problem. On the other hand, if the graph data collected in practice is corrupted by a certain type of noise that needs to be removed, denoising is a helpful step to take before any further data analysis. Given a graph $G=\{E, V, w\}$ with $|V|=K$, and a graph function $\bar u: V\mapsto \R$, the observed data or the data we collect is $f=\bar u+\eta$, where $\eta$ is some noise, which shall be assumed to be Gaussian white noise.

Now, we propose to use the following analysis based model for denoising, which was originally proposed in image processing \cite{elad2005simultaneous,starck2005image,cai2009split}:
\begin{equation}\label{Model:Denoising}
\min_{u}\ \|\bnu\cdot\bm{W}u\|_{1,G}+\frac12\|u-f\|_{2,G}^2,
\end{equation}
where $\bm{W}$ is the tight wavelet frame transform \eqref{D:TightFrame:Discrete} and $$\|\bnu\cdot\bm{W}u\|_{1,G}:=\sum_{(j,l)\in\mathbb{B}}\nu_{j,l}\|W_{j,l}u\|_{1,G}$$ with $\nu_{j,l}\ge0$ being the tuning parameters and $\mathbb{B}$ defined in \eqref{D:WT:IndexSet}. In our simulations, we choose
\begin{equation}\label{D:Lambda}
\nu_{j,l}=
\begin{cases}
4^{-l+1}\nu & \mbox{for } j\ne0, 1\le l\le L,\\
0 & \mbox{for } j=0, l=L,
\end{cases}
\end{equation}
where $\lambda>0$ is a scalar tuning parameter. Note that model \eqref{Model:Denoising} can be solved efficiently using the split Bregman algorithm \cite{GoldO,cai2009split}, which is also equivalent to the alternating direction method of multipliers (ADMM)
\cite{gabay1976dual,bertsekas1989parallel,eckstein1992douglas}. Applying the derivation of split Bregman algorithm to our model \eqref{Model:Denoising}, we have
\begin{equation}\label{Algorithm:ADMM:Denoising}
\begin{cases}
u^{k+1}=\arg\min_{u}\ \frac12\|u-f\|_{2,G}^2+\frac{\mu}{2}\|\bm{W}u-d^k+b^k\|_2^2,\\
d^{k+1}=\arg\min_{d}\ \|\bnu\cdot d\|_{1,G}+\frac{\mu}{2}\|d-(\bm{W}u^{k+1}+b^k)\|_2^2,\\
b^{k+1}=b^{k}+\bm{W}u^{k+1}-d^{k+1}.
\end{cases}
\end{equation}
Both of the sub-optimization problems in \eqref{Algorithm:ADMM:Denoising} have closed-form solutions. Replacing them by their closed-form solutions, we obtain the following denoising algorithm solving \eqref{Model:Denoising}:
\begin{equation}\label{Algorithm:Denoising}
\begin{cases}
u^{k+1}=(D+\mu I)^{-1}\left(Df+\mu\bm{W}^\top(d^k-b^k)\right),\\
d^{k+1}=\cS_{\bnu_G/\mu}(\bm{W}u^{k+1}+b^k),\\
b^{k+1}=b^{k}+\bm{W}u^{k+1}-d^{k+1},
\end{cases}
\end{equation}
where $(D+\mu I)^{-1}=\mbox{diag}\{\frac{1}{d[1]+\mu},\ldots,\frac{1}{d[K]+\mu}\}$, $\bnu_G:=\{\nu_{j,l}D:\ (j,l)\in\mathbb{B}\}$ and the soft-thresholding operator $\cS$ is defined pointwise as $\cS_x(y):=\frac{y}{|y|}\max\{|y|-x,0\}$. In our numerical implementation, we choose zero initialization of the algorithm, i.e. $u^0=0$ and $b^0=d^0=0$.

\begin{remark}
We specifically choose the Euclidean $\ell_2$-norm in the second term of the first two equations of \eqref{Algorithm:ADMM:Denoising}, which corresponds to the choice of the Euclidean linear and augmented terms in the derivation of the split Bregman algorithm using the augmented Lagrangian method (see \cite{gabay1976dual,bertsekas1989parallel,eckstein1992douglas} or \cite[Chapter 4]{Dong2010IASNotes} for details). Another choice, perhaps a more straightforward choice, is to use $\|\cdot\|_{2,G}^2$. However, this will lead to an inversion of the non-diagonal matrix $D+\bm{W}^\top \bm{D}\bm{W}$ in solving for $u^{k+1}$, where $\bm{D}$ is a $(rL+1)K\times(rL+1)K$ diagonal matrix given by $\bm{D}=\mbox{diag}\{D, D, \ldots, D\}$. Therefore, for computation efficiency, we shall use the algorithm \eqref{Algorithm:Denoising} to solve \eqref{Model:Denoising}.
\end{remark}

The graph we consider is the one formed by sampling a unit sphere as described in Section \ref{Sec:WFTG}. We select four examples of graph data as the noise-free data $\bar u$, where ``Slope'' and ``Eric'' were given in Figure \ref{Fig:WFTG:GraphData}, and the other two graph data, ``Barbara'' and ``Lena'', are generated by mapping these two widely used images on the discrete sphere. Gaussian white noise is added to $\bar u$ with a standard deviation $0.05$. The noise-free and noisy data are visualized in Figure \ref{Fig:Denoising:Linear}, first and second row respectively. The only reason that spherical data is used instead of general graph data is for visualization purpose, so that one can visually judge the denoising quality by examining the images.

In our simulations, when the WFTG is chosen for $\bW$, we fix the level of decomposition to 1, since using higher decomposition levels only slightly improves the denoising quality while the computation cost is noticeably increased. We choose ``Haar", ``linear" and ``quadratic" masks given by Example \ref{example:masks} for the WFTG and compare the denoising results. For each choice of masks and each graph data, the parameter $\nu$ of \eqref{D:Lambda} is adjusted manually for optimal denoising quality.

For comparison purpose, we also choose the SGWT as the transform $\bW$ in model \eqref{Model:Denoising}. The implementation of the algorithm \eqref{Algorithm:ADMM:Denoising} is exactly the same as using WFTG for $\bW$. For SGWT, we choose the number of scales to be 5 and order of the Chebyshev polynomial approximation as 25. Note that one may choose a different number of scales. However, the denoising quality varies only slightly when larger number of scales are chosen. The specific graph wavelet systems of SGWT used for the comparisons are generated by a cubic spline and the Mexican hat wavelet (see \cite{hammond2011wavelets} for further details). For optimal denoising quality, we select the same parameter $\nu$ across all different scales (instead of scaling it like in \eqref{D:Lambda}), where the specific value of $\nu$ is manually chosen for different graph data.

To see the denoising effect using the WFTG and SGWT with different frame systems, we summarize the denoising errors in Table \ref{Table:Denoising}. The desnoising error is defined by $$\mbox{Denoising Error }=\frac{\|u-\bar u\|_2}{\|\bar u\|_2},$$ where $u$ is the denoised data and $\bar u$ is the original data. From Table \ref{Table:Denoising}, it is obvious that the denoising results using the WFTG are significantly better than those of the SGWT. For WFTG, we can see that ``linear'' produces better results than ``Haar'', while ``quadratic'' is comparable with ``linear''. However, WFTG using ``Quadratic'' masks is slower than ``Haar'' and ``Linear''. Therefore, the ``linear'' tight wavelet frame system seems to be a good balance between computational cost and reconstruction quality for denoising problems. However, when the graph data can be well approximated by a piecewise constant function, such as ``Slope'', the ``Haar'' system seems to be the best choice. For visual examination, denoising results using the ``linear'' tight wavelet frame system for the WFTG, cubic spline and Mexican hat wavelet for SGWT are shown in the 3rd-5th row of Figure \ref{Fig:Denoising:Linear}. It is obvious from Figure \ref{Fig:Denoising:Linear} that the visual quality of the denoising results of the WFTG is significantly better than that of the SGWT.

Finally, we note that if one has a linear inverse problem to solve on graph, i.e. $f=A\bar u+\eta$, then we can change the second term of \eqref{Model:Denoising} to $\frac12\|Au-f\|_{2,G}^2$ (see e.g. \cite{cai2009split,CDOS2011}). Also, if the additive noise $\eta$ is non-Gaussian, such as Poisson, one may modify the norm of the second term of \eqref{Model:Denoising} to properly incorporate the correct noise statistics (see e.g. \cite{le2007variational,chan2007multilevel,zhang2008wavelets}).

\begin{table}[htp]
\caption{{Comparisons: denoising errors of WFTG and SGWT.}}\label{Table:Denoising}
\centering
\begin{tabular}{c|c|c|c|c|c}
\hline
\multicolumn{1}{c|}{Errors} & \multicolumn{1}{c|}{WFTG} & \multicolumn{1}{c|}{WFTG} &
\multicolumn{1}{c|}{WFTG} & \multicolumn{1}{c|}{SGWT} & \multicolumn{1}{c}{SGWT}\\
\multicolumn{1}{c|}{} & \multicolumn{1}{c|}{``Haar''} & \multicolumn{1}{c|}{``Linear''} &
\multicolumn{1}{c|}{``Quadratic''} & \multicolumn{1}{c|}{``Cubic"} & \multicolumn{1}{c}{``Mexican Hat"}\\
\hline
 \multicolumn{1}{c|}{``Slope''}        & 0.04279 & 0.04224 &  0.04307 & 0.06278 & 0.05989 \\
 \multicolumn{1}{c|}{``Eric''}         & 0.02995 & 0.02872 &  0.02886 & 0.03926 & 0.03787 \\
 \multicolumn{1}{c|}{``Barbara''}      & 0.07587 & 0.07419 &  0.07376 & 0.08857 & 0.08743 \\
 \multicolumn{1}{c|}{``Lena''}         & 0.07972 & 0.07864 &  0.07829 & 0.08692 & 0.08830 \\
 \hline
\end{tabular}
\end{table}

\begin{figure}[htp]
\centering
    \includegraphics[width=5.6in]{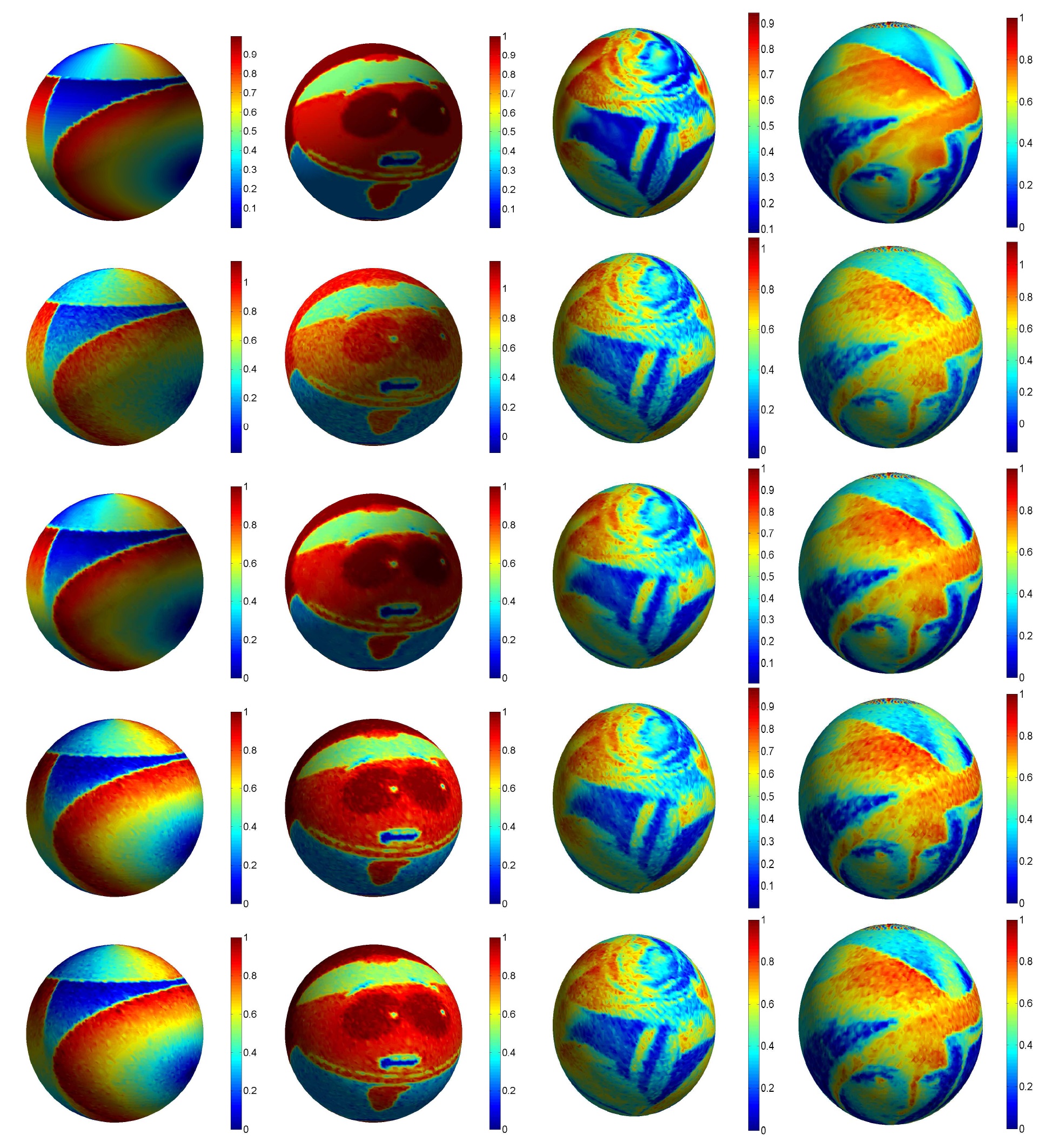}
\caption{This figure shows the four noise-free graph data (first row), the corresponding noisy data (second row) and the denoising results (3rd-5th row) of  WFTG (``linear"), SGWT (cubic spline) and SGWT (Mexican hat).}\label{Fig:Denoising:Linear}
\end{figure}

\subsection{Semi-Supervised Clustering}\label{SubS:Clustering}

We consider semi-supervised clustering, where the labeling of a small set of the data is provided in advance. We introduce an optimization model based on the proposed WFTG, along with a fast numerical algorithm. The proposed model and algorithm are motivated by the earlier work on wavelet frame based image segmentation \cite{Dong2010Seg} and surface reconstruction \cite{Dong2011SurfRec}, as well as the variational frameworks for image segmentation \cite{CV,chan1999active} and graph clustering \cite{bertozzi2012diffuse,merkurjev2014global}. We shall focus on the binary clustering problem (i.e. two classes). Generalization to multiple-classes clustering can be done following a similar idea as in multiphase image segmentation \cite{sandberg2005logic,bae2011global,tai2013wavelet} which will be considered in our future studies.

Suppose the given graph $G=\{E,V,w\}$ contains two clusters. Let $|V|=K$ and $\Gamma\subset\{1,2,\ldots,K\}$ be the set of labels where, for $i\in\Gamma$, we know which cluster $v_i$ belongs to. Let $\Gamma:=\Gamma_0\cup\Gamma_1$ where $\Gamma_0$ is the index set for cluster 1 and $\Gamma_1$ for cluster 2. Define $f: \Gamma\mapsto \R$ as
\begin{equation*}
f[i]:=
\begin{cases}
0 & \mbox{for }i\in\Gamma_0,\\
1 & \mbox{for }i\in\Gamma_1.
\end{cases}
\end{equation*}
Our objective is to recover a function $u: V\mapsto [0,1]$ with $u_{|\Gamma}\approx f$, such that the two sets $\{i\ |\ u[i]\ge\beta\}$ and $\{i\ |\ u[i]<\beta\}$ provide a good clustering of the given graph for some $\beta\in (0,1)$. To find such $u$, we propose the following model
\begin{equation}\label{Model:Clustering}
\min_{u \in [0,1]}\ \|\bnu\cdot\bm{W}u\|_{1,G}+\frac12\|u_{|\Gamma}-f\|_{2,G}^2,
\end{equation}
where the first term impose a regularization of the level sets of $u$, while the second is the fidelity term making sure that $u_{|\Gamma}\approx f$. The parameter $\bnu$ is chosen as in \eqref{D:Lambda}.

To solve \eqref{Model:Clustering}, we apply the derivation of the split Bregman algorithm again, and obtain the following algorithm
\begin{equation}\label{Algorithm:Clustering2}
\begin{cases}
u^{k+1}=\arg\min_{u\in[0,1]}\ \frac12\|u_{|\Gamma}-f\|_{2,G}^2+\frac{\mu}{2}\|\bm{W}u-d^k+b^k\|_2^2,\\
d^{k+1}=\cS_{\bnu_G/\mu}(\bm{W}u^{k+1}+b^k),\\
b^{k+1}=b^{k}+\bm{W}u^{k+1}-d^{k+1},
\end{cases}
\end{equation}
where the subproblem for $u^{k+1}$ is approximated by the following steps
\begin{equation*}
\begin{cases}
\left(u^{k+\frac12}\right)_{|\Gamma^c}=\left(\bm{W}^\top(d^{k}-b^{k})\right)_{|\Gamma^c},\\
\left(u^{k+\frac12}\right)_{|\Gamma}=(D_\Gamma+\mu I)^{-1}\left(f+\mu \left(\bm{W}^\top(d^{k}-b^{k})\right)_{|\Gamma}\right),\\
u^{k+1}=\max\left\{\min\left\{u^{k+\frac12},1\right\}, 0\right\},
\end{cases}
\end{equation*}
where $D_\Gamma=\mbox{diag}\{d[k]:\ k\in\Gamma\}$.

\subsubsection{Synthetic Data}

Now, we test algorithm \eqref{Algorithm:Clustering2} using a synthetic data set known as the ``two moons'' which was first used in \cite{buhler2009spectral}. This data set is constructed from two half unit circles in $\R^2$ with centers at (0, 0) and (1, 0.5). For each circle, 1000 points are uniformly chosen and lifted to $\R^{100}$ by adding i.i.d. Gaussian white noise with variance $=0.02$ to each coordinate. The graph is formed from the point set using the exact same way as we discussed earlier in Section \ref{Sec:WFTG}.

Since the solution $u$ of \eqref{Model:Clustering} is ideally binary, it is more suitable to use the ``Haar'' tight wavelet frame system given in Example \ref{example:masks}. The level of decomposition is chosen to be $1$. Parameters are properly tuned for optimal clustering results. Algorithm \ref{Algorithm:Clustering2} is initialized by
\begin{equation}\label{Initialization}
u^0[k]:=
\begin{cases}
0, & u_1[k]>0,\\
1, & u_1[k]\le 0,
\end{cases}
\end{equation}
where $u_1$ is the second eigenvector of $\cL$, and $d^0=b^0=\bm{W}u^0$.

Same as denoising, we compare the clustering results of the model \eqref{Model:Clustering} (implemented by algorithm \eqref{Algorithm:Clustering2}) using the proposed WFTG (``Haar") and the SGWT. For SGWT, Meyer wavelet is used because numerically it produces the best clustering results among all available choices given by the codes online. Figure \ref{Fig:Clustering:2Moons} shows the ground truth of the two clusters and the clustering result by our algorithm using WFTG (``Haar") and SGWT (``Meyer") assuming 10\% of the labels of the vertices (chosen randomly) are known. More clustering results assuming different percentages of known labels are summarized in Table \ref{Table:Clustering:2Moons}. These results suggest that, for the proposed clustering model \eqref{Model:Clustering}, the WFTG is more suitable than the SGWT. In the next subsection, we shall test model \eqref{Model:Clustering} with WFTG on two realistic data sets and compare the results with some state-of-the-art clustering methods.

\begin{figure}[htp]
\centering
    \includegraphics[width=5.7in]{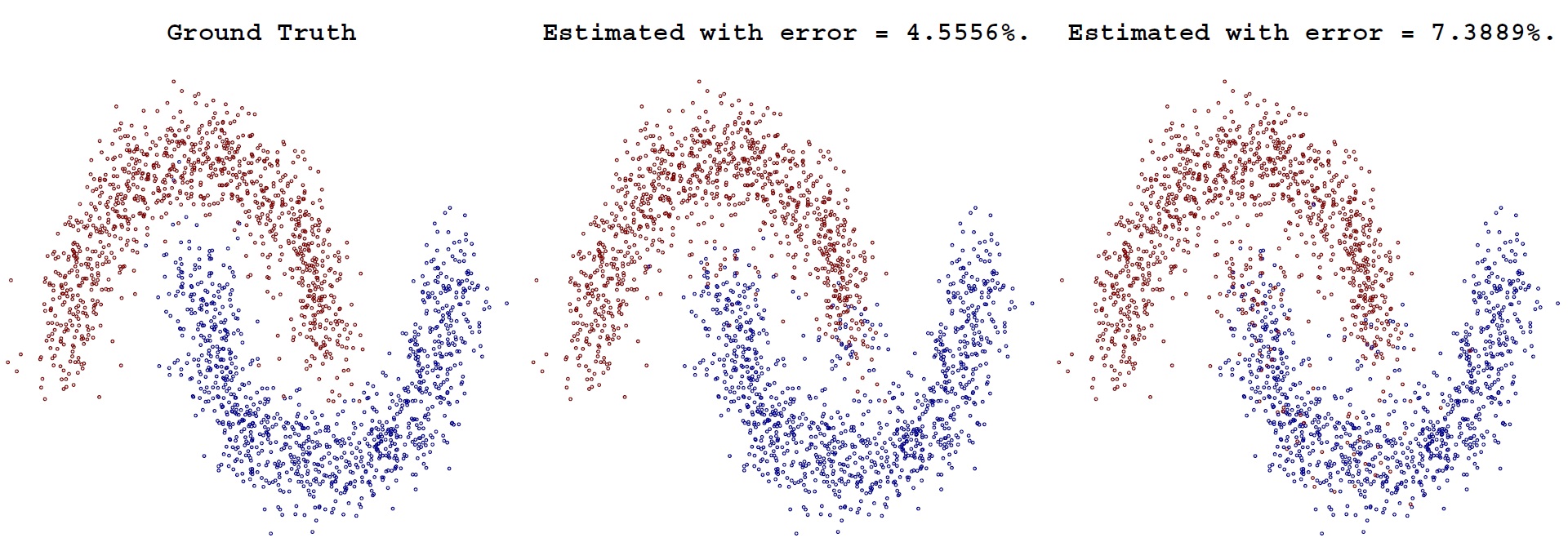}
\caption{This figure shows the ground truth of the two clusters (left) and the estimated clusters using WFTG (middle) and SGWT (right) with error rate $4.5556$\% and $7.3889$\% respectively.}\label{Fig:Clustering:2Moons}
\end{figure}

\begin{table}[htp]
\caption{{Clustering errors (\%) of the data set ``two moons'' using the WFTG and SGWT.}}\label{Table:Clustering:2Moons}
\centering
\begin{tabular}{c|c|c|c|c}
\hline
\multicolumn{1}{c|}{ Known Labels (\%) } & \multicolumn{1}{c|}{15} & \multicolumn{1}{c|}{10} &
\multicolumn{1}{c|}{5} & \multicolumn{1}{c}{3}\\
\hline
 \multicolumn{1}{c|}{WFTG}        & 4.1765 & 4.5556 &  5.9474 & 6.3402 \\
 \hline
 \multicolumn{1}{c|}{SGWT}         & 7.4118 & 7.3889 &  7.6842 & 8.7629 \\
 \hline
\end{tabular}
\end{table}

\subsubsection{Real Data Sets}

We test the proposed model \eqref{Model:Clustering} and algorithm \eqref{Algorithm:Clustering2} using the proposed WFTG on two real date sets that are often used as test sets by many recent clustering methods. We compare our method with four recently proposed semi-supervised clustering algorithms, which are: the \textbf{max-flow} method and primal augmented Lagrangian method (\textbf{PAL}) by \cite{merkurjev2014global}, the \textbf{binary MBO} scheme by \cite{merkurjev2013mbo}, and the method based on Ginzburg-Landau (\textbf{GL}) functional by \cite{bertozzi2012diffuse}. All these methods are based on solving or approximately solving a total variation (TV) based model on graphs, which take the same form as \eqref{Model:Clustering} with the first term, i.e. the regularization term, of \eqref{Model:Clustering} replaced by the graph BV (bounded variation) semi-norm of $\bu$. Comparisons with some other methods proposed earlier in the literature will also be mentioned.

The first real data set we use is the MNIST data set \cite{lecun1998mnist}, which is available at http:// yann.lecun.com/ exdb/ mnist/. It contains 70000 $28\times28$ images of handwritten digits from 0-9. Since our clustering method is binary (2-classes clustering), we choose the subset with digits 4 and 9 to classify since these digits are harder to distinguish. This created a data set of 13782 image vectors, which is either 4 or 9. In our numerical simulations, we randomly draw 500 image vectors from the data set (about 3.62\%) and use them to create the known label set $\Gamma$ and graph data $f$ of \eqref{Model:Clustering}. We repeat this process 100 times and report the average classification errors (in \%) and computation time.

The second data set is the banknote authentication data set from the UCI machine learning repository \cite{bache2013uci}. It is a data set of 1372 features extracted from images ($400 \times 400$ pixels) of genuine and forged banknotes using wavelet transforms. The goal is to separate the banknotes into being either genuine or forged. In our numerical simulations, we randomly draw 50 data vectors from the entire data set (about 3.64\%) and use them to create the known label set $\Gamma$ and graph data $f$ of \eqref{Model:Clustering}. We repeat this process 100 times and report the average classification errors (in \%) and computation time.

For both MNIST and banknote data sets, we use ``Haar" masks of Example \ref{example:masks} for the WFTG. Level of decomposition is chosen to be 1. The parameters $(\nu, \mu)$ ($\nu$ is given by \eqref{D:Lambda}) and the total number of iterations are chosen to be $(2,0.01)$ and $200$ for MNIST data set, and $(0.02,0.02)$ and $100$ for banknote data set. In all the simulations, the algorithm \eqref{Algorithm:Clustering2} is initialized using the second eigenvector of the graph Laplacian as given by \eqref{Initialization}.

Summary of the clustering results using both data sets is given in Table \ref{Table:Clustering:Real}. The results of the methods max-flow, PAL, binary MBO and GL are also included in Table \ref{Table:Clustering:Real}, which are taken from \cite{merkurjev2014global} where the exact same data sets and experimental settings were used. Our method is only slightly worse than the four latest clustering method for MNIST data set, while it is comparable to the max-flow and PAL methods and much better than binary MBO and GL for banknote data set.

There are many other clustering methods proposed in the literature that were tested on the MNIST data set as well. For example, an unsupervised clustering method was recently proposed by \cite{hu2013method}, where the classification of the digits 4 and 9 were tested. A purity score (measures the fraction of the vertices that have been assigned to the correct class) of 97.7 was reported. Also, methods based on k-nearest neighbors \cite{lecun1998gradient,lecun1998mnist}, neural or convolutional nets \cite{lecun1998gradient,lecun1998mnist,ciresan2011flexible}, and SVM \cite{decoste2002training} report clustering errors of 2.83\%-5.00\%, 0.35\%-4.70\%, and 0.68\%-1.40\%, respectively. We note that all of these approaches are supervised learning methods using 60,000 out of 70,000 digits as a training set, i.e. assuming about 85.7\% of the labels are already known. However, our method only takes 3.6\% of the entire data set as training set, and the performance is still competitive with the above approaches. Furthermore, no preprocessing or initial dimension reduction on the data set are required for our method, unlike most of the aforementioned algorithms.

\begin{table}[htp]
\caption{{Clustering errors (\%) of MNIST and banknote data set. Computation time (in seconds) for our method is included in parenthesis.}}\label{Table:Clustering:Real}
\centering
\begin{tabular}{c|c|c|c|c|c}
\hline
\multicolumn{1}{c|}{ Errors (\%) } & \multicolumn{1}{c|}{Our Method} & \multicolumn{1}{c|}{Max-Flow} &
\multicolumn{1}{c|}{PAL} & \multicolumn{1}{c|}{Binary MBO} & \multicolumn{1}{c}{GL}\\
\hline
 \multicolumn{1}{c|}{MNIST}            & 2.76 (8.5 sec.) & 1.52 &  1.56 & 1.64 & 1.75 \\
 \hline
 \multicolumn{1}{c|}{Banknote}         & 1.64 (2.9 sec.) & 1.17 &  1.71 & 6.52 & 3.90 \\
 \hline
\end{tabular}
\end{table}

\section*{Acknowledgement}

The author would like to thank the anonymous reviewers for their constructive suggestions and comments that helped tremendously in improving the presentation, clarity and rigorousness of this paper.

\bibliographystyle{ieeetr}
\bibliography{ReferenceLibrary}

\end{document}